\newtheorem{thm}{Theorem}[section]
\newtheorem{cor}[thm]{Corollary}
\newtheorem{lem}[thm]{Lemma}
\newtheorem{exm}[thm]{Example}
\newtheorem{prop}[thm]{Proposition}
\theoremstyle{definition}
\newtheorem{defn}[thm]{Definition}
\theoremstyle{remark}
\newtheorem{rem}[thm]{\bf Remark}
\numberwithin{equation}{section}
\begin{document}
\title[The finite EI categories of Cartan type]{The finite EI categories of Cartan type}
\author[Xiao-Wu Chen, Ren Wang] {Xiao-Wu Chen, Ren Wang$^*$}

\thanks{$^*$ The corresponding author}
\thanks{}
\subjclass[2010]{16G20, 16D20, 20C05}
\date{\today}

\thanks{E-mail: xwchen$\symbol{64}$mail.ustc.edu.cn; renw$\symbol{64}$mail.ustc.edu.cn}
\keywords{EI category, Cartan matrix, category algebra, tensor algebra, graph with automorphism}%

\maketitle

\dedicatory{}%
\commby{}%

\begin{abstract}
To each  symmetrizable Cartan matrix, we associate a finite free EI category. We prove that the corresponding category algebra is isomorphic to the algebra defined in [C. Geiss, B. Leclerc, and J. Schr\"{o}er, {\em Quivers with relations for symmetrizable Cartan matrices I: Foundations},  Invent. Math. {\bf 209} (2017), 61--158], which is associated to another symmetrizable Cartan matrix. In certain cases, the  algebra isomorphism provides an algebraic enrichment of the well-known correspondence between symmetrizable Cartan matrices and graphs with automorphisms.
\end{abstract}

\section{Introduction}

Recall that a finite category $\mathcal{C}$ is EI, provided that each \underline{e}ndomorphism is an \underline{i}somorphism. Finite EI categories arise naturally in the modular representation theory of finite groups; see \cite{Lin04, We08, Lin14}. The notion of a free EI category is introduced in \cite{L2011}. Each finite EI category is a quotient of a free EI category.

For each symmetrizable Cartan matrix,  there is a class of finite dimensional algebras  \cite{GLS},  whose modules categorify the corresponding root system over an arbitrary field $k$.  More precisely, let $(C, D, \Omega)$ be a Cartan triple, where $C$ is a symmetrizable  Cartan matrix, $D$ is its symmetrizer and $\Omega$ is an acyclic orientation of $C$. A $k$-algebra $H(C, D, \Omega)$ is introduced in \cite{GLS}, where the rank vectors of certain $H(C, D, \Omega)$-modules are some positive roots of the Lie algebra associated to $C$.

There are remarkable common features of the work \cite{L2011} and \cite{GLS}: the category algebra $k\mathcal{C}$ of a free EI category $\mathcal{C}$ is a tensor algebra, and is $1$-Gorenstein under mild assumptions \cite{Wang}; the algebra $H(C, D, \Omega)$  is also a tensor algebra,  and is always $1$-Gorenstein; see also \cite{Geu}; moreover, both algebras are closely related to the path algebras of finite acyclic quivers. We mention the work \cite{Ku}, which provides a categorical perspective for the algebra $H(C, D, \Omega)$,  and the  paper \cite{CL}, inspired by \cite{GLS},  where Gorenstein tensor algebras are studied in general.

In view of these common features, the following question arises: when are the above algebras $k\mathcal{C}$ and $H(C, D, \Omega)$ isomorphic? Another reason to be interested in this question is that it might establish new links between the modular representation theory and Lie theory.

We give a partial answer to the above question. Indeed, inspired by \cite{GLS}, we associate to each Cartan triple $(C, D, \Omega)$  a free EI category $\mathcal{C}(C, D, \Omega)$, called a \emph{free EI category of Cartan type}. We prove that the category algebra $k\mathcal{C}(C, D, \Omega)$ is isomorphic to the algebra $H(C', D', \Omega')$ associated to another, usually different,  Cartan triple $(C', D', \Omega')$; see Theorem~\ref{thm:main}.

The construction of $(C', D', \Omega')$ from $(C, D, \Omega)$ depends on the characteristic $p$ of the base field $k$; see Subsection \ref{sub:4.2}. The case where $p$ is zero or  coprime to each entry of $D$ seems to be of particular interest; in this case $C'$ is symmetric and $D'$ is the identity matrix. Then the algebra $H(C', D', \Omega')$ is isomorphic to the path algebra of an acyclic quiver of type $C'$. There is a well-known correspondence between symmetrizable Cartan matrices and graphs with automorphisms in \cite[Section 14.1]{Lus}. It turns out that our construction is compatible with this correspondence. More precisely, if $p$ is zero or coprime to each entry of $D$, the Cartan matrix $C$ corresponds exactly to the graph of $C'$ with a certain automorphism by the mentioned correspondence. Roughly speaking, our algebra isomorphism between the category algebra $k\mathcal{C}(C, D, \Omega)$ of type $C$ and the path algebra  of type $C'$ might be viewed as an algebraic enrichment of the correspondence in \cite{Lus}; see Proposition~\ref{prop:compa}. We mention that a different enrichment appears implicitly in \cite[Section~6]{Hu04} using species and  Galois extensions of the base field.

The paper is structured as follows. In Section 2, we recall basic facts on free EI categories. We recall from \cite{GLS} the algebra $H(C, D, \Omega)$ in Section 3. We introduce the notion of a free EI category of Cartan type in Section 4. The construction of  $(C', D', \Omega')$ from $(C, D, \Omega)$  is given in Subsection \ref{sub:4.2}. The core in the proof of  Theorem~\ref{thm:main}  is Proposition~\ref{prop:main} in Section 5, which states an explicit decomposition of a tensor bimodule.

In Subsection 6.1, we prove that in the construction of Subsection~\ref{sub:4.2}, the valued graph of $C$ is a disjoint union of Dynkin graphs (\emph{resp}. Euclidean graphs, graphs of indefinite type) if and only if so is the valued graph of $C'$; see Proposition~\ref{prop:type}.  In Subsection 6.2, we compare the construction with the correspondence in \cite[Section 14.1]{Lus}.

\section{Finite free EI categories}

 In this section,  we recall the construction of a finite free EI category from a finite EI quiver. We observe that the category algebra of the resulted EI category is isomorphic to a certain tensor algebra.

Let $\mathcal{C}$ be a finite category, that is, a category with only finitely many morphisms. Consequently, the category $\mathcal{C}$ contains only finitely many objects. Denote by ${\rm Mor}(\mathcal{C})$ (resp. ${\rm Obj}(\mathcal{C})$) the finite set of morphisms (resp. objects) in $\mathcal{C}$. The finite category $\mathcal{C}$ is said to be EI provided that every endomorphism is an isomorphism. Therefore, for each object $x$, ${\rm Hom}_{\mathcal{C}}(x,x)={\rm Aut}(x)$ is a finite group.

Let $k$ be a field. The \emph{category algebra} $k\mathcal{C}$ of $\mathcal{C}$ over $k$ is defined as
follows: $k\mathcal{C}=\bigoplus\limits_{\alpha \in {\rm Mor}(\mathcal{C})} k\alpha$ as
a $k$-vector space and the product  is given by the rule
\[\alpha  \beta=\left\{\begin{array}{ll}
\alpha\circ\beta, & \text{ if }\text{$\alpha$ and $\beta$ can be composed in $\mathcal{C}$}; \\
0, & \text{otherwise.}
\end{array}\right.\]
The unit is given by $1_{k\mathcal{C}}=\sum\limits_{x \in {\rm Obj}(\mathcal{C})}{\rm Id}_x$,
where ${\rm Id}_x$ denotes the identity endomorphism of $x$.

Let $G$ and $H$ be two groups. Recall that a \emph{$(G,H)$-biset} is a  set $X$ with a left $G$-action and a right $H$-action such that the two actions commute, that is,  $(gx)h=g(xh)$ for all $g\in G, x\in X$ and $h\in H$.

Let $X$ be a $(G,H)$-biset and $Y$ be a $(H,K)$-biset. The \emph{biset product} of $X$ and $Y$, denoted by $X\times_H Y$, is the set $X\times Y/\sim$ of equivalence classes under the equivalence relation $(x,hy)\sim (xh,y)$ for $x\in X, h\in H$ and $y\in Y$. By abuse of notation, the elements in $X\times Y/\sim$ are still denoted by $(x,y)$ for $x\in X$ and $y\in Y$. The set $X\times_H Y$ is naturally a $(G,K)$-biset; see \cite[Section 8]{We00}.

For a $(G,H)$-biset $X$, we denote by $kX$ the $k$-vector space with a $k$-basis $X$. Then $kX$ is naturally a $kG$-$kH$-bimodule.

The following fact is well known.

\begin{lem}\label{lem:bi-prod}
Let $X$ be a $(G,H)$-biset and $Y$ be a $(H,K)$-biset. Then there is an isomorphism of $kG$-$kK$-bimodules
\[ kX\otimes_{kH} kY\overset{\sim}{\rightarrow} k(X\times_H Y), \ \ \ \ x\otimes y\mapsto (x,y), \]
for $x\in X$ and $y\in Y$. \hfill $\square$
\end{lem}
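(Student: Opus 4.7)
The plan is to construct the inverse map explicitly and verify everything at the level of bases. The stated map $\phi \colon kX \otimes_{kH} kY \to k(X \times_H Y)$ is built from the $k$-bilinear map $kX \times kY \to k(X \times_H Y)$ given on basis elements by $(x,y) \mapsto (x,y)$; this map is $kH$-balanced, because for all $x \in X$, $h \in H$, $y \in Y$ one has $(xh,y)=(x,hy)$ in $X \times_H Y$ by the very definition of the equivalence relation used to form the biset product. Hence the universal property of $\otimes_{kH}$ produces a well-defined $k$-linear map $\phi$.

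Next I would construct a candidate inverse $\psi \colon k(X \times_H Y) \to kX \otimes_{kH} kY$ by sending a representative pair $(x,y)$ to $x \otimes y$. The key step is to show that $\psi$ descends to equivalence classes, i.e.\ that $xh \otimes y = x \otimes hy$ inside $kX \otimes_{kH} kY$; but this is precisely the defining relation in a tensor product over the ring $kH$, so $\psi$ is well defined on the basis $X \times_H Y$ and extends linearly. On basis elements the compositions $\phi \circ \psi$ and $\psi \circ \phi$ are the identity, so $\phi$ is a $k$-linear isomorphism.

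Finally, I would check that $\phi$ respects the bimodule structures. The left $G$-action on $kX \otimes_{kH} kY$ acts via the first tensor factor, and the left $G$-action on $k(X \times_H Y)$ acts via the left entry of a representative pair; these agree on basis elements because $g \cdot (x \otimes y) = (gx) \otimes y \mapsto (gx, y) = g \cdot (x,y)$. The right $K$-action is handled symmetrically.

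The argument is essentially bookkeeping; the only genuine content is the coincidence between the equivalence relation defining $X \times_H Y$ and the relation $xh \otimes y = x \otimes hy$ in a tensor product over $kH$, which makes both directions automatically well defined. I expect no obstacle beyond this observation.
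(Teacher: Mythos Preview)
Your argument is correct and is exactly the standard verification one would give; the paper itself omits the proof entirely (the lemma is marked as well known and closed with a box), so there is nothing to compare against beyond noting that your write-up fills in precisely the routine details the authors suppressed.
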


Recall that a finite quiver $Q=(Q_0, Q_1; s,t)$ consists of a finite set $Q_0$ of vertices, a finite set $Q_1$ of arrows endowed with two maps $s, t\colon Q_1\rightarrow Q_0$.  For an arrow $\alpha$, $s(\alpha)$ and $t(\alpha)$ are called its starting vertex and terminating vertex, respectively. Then the arrow $\alpha$ is visualized as $\alpha\colon s(\alpha)\rightarrow t(\alpha)$.  For $n\geq 2$, a path $p=\alpha_n\cdots \alpha_2\alpha_1$ of length $n$ consists of arrows $\alpha_i$ such that $t(\alpha_i)=s(\alpha_{i+1})$ for $1\leq i<n$; we set $s(p)=s(\alpha_1)$ and $t(p)=t(\alpha_n)$. Here, we write the concatenation from the right to the left. We denote by $Q_n$ the set of paths of length $n$. We observe that a path of length one is just an arrow. We identify each vertex $i$ with the corresponding path  $e_i$ of length zero.

Denote by $kQ=\bigoplus_{n\geq 0} kQ_n$ the path algebra of $Q$, whose multiplication is given by the concatenation of paths. In particular, each path $e_i$ of length zero is an idempotent. Then we have  $1_{kQ}=\sum_{i\in Q_0} e_i$.   The quiver $Q$ is \emph{acyclic} provided that it does not contain oriented cycles, in which case the path algebra $kQ$ is finite dimensional.

 Recall from \cite[Definition 2.1]{L2011} that a \emph{finite EI quiver} $(Q, X)$ consists of a finite acyclic quiver $Q$ and an assignment $X=(X(i), X(\alpha))_{i\in Q_0, \alpha\in Q_1}$. More precisely, for each vertex $i\in Q_0$, $X(i)$ is a finite group, and for each arrow $\alpha$, $X(\alpha)$ is a finite $(X(t(\alpha)), X(s(\alpha)))$-biset. For a path $p=\alpha_n\cdots \alpha_2\alpha_1$ of length $n$, it is natural to define
 $$X(p)=X({\alpha_n}) \times_{X({t(\alpha_{n-1})})} X({\alpha_{n-1}}) \times_{X({t(\alpha_{n-2})})} \cdots \times_{X({t(\alpha_2)})} X({\alpha_2})\times_{X({t(\alpha_1)})} X({\alpha_1})
 .$$
  Then $X(p)$ is naturally a $(X(t(p)), X(s(p)))$-biset. Indeed, for two paths $p, q$ satisfying $s(p)=t(q)$, we have a natural isomorphism
  \begin{align}\label{iso:1}
  X(p)\times_{X(t(q))} X(q)\stackrel{\sim}\longrightarrow X(pq),
  \end{align}
  where $pq$ denotes the concatenation. We identify $X(e_i)$ with $X(i)$.

Each finite EI quiver  $(Q, X)$ gives  rise to a finite EI category $\mathcal{C}=\mathcal{C}(Q, X)$. The objects of $\mathcal{C}$ coincide with the vertices of $Q$. For two objects $x$ and $y$, we have
$${\rm Hom}_\mathcal{C}(x, y)=\bigsqcup_{\{p {\rm{ \; paths \; in\;  }} Q {\rm{\; with \; }} s(p)=x  {\rm{\; and \; }} t(p)=y\}} X(p).$$
The composition is induced by the concatenation of paths and the isomorphism (\ref{iso:1}). Since $Q$ is acyclic, we infer that $\mathcal{C}$ is a finite category; moreover, we have that ${\rm Hom}_\mathcal{C}(x, x)=X(x)$ is a finite group. It follows that the category  $\mathcal{C}$ is indeed finite EI.

Following \cite[Definition 2.2]{L2011}, we call the above category $\mathcal{C}(Q, X)$ the \emph{free EI category} associated to the finite EI quiver $(Q, X)$. More generally, a finite EI category $\mathcal{C}$ is called \emph{free} provided that it is isomorphic to $\mathcal{C}(Q, X)$ for some finite EI quiver $(Q, X)$.  For alternative characterizations of finite free EI categories, we refer to \cite[Proposition 2.8]{L2011}, \cite[Proposition 4.5]{Wang} and \cite[Proposition 2.1]{Wang17}.

We have the following observation.

\begin{prop}\label{prop:iso}
Let $\mathcal{C}=\mathcal{C}(Q, X)$ be the finite free EI category associated to a finite EI quiver $(Q, X)$. Set $A=\prod_{x\in Q_0} kX(x)$ and $V=\bigoplus_{\alpha\in Q_1} kX(\alpha)$, where $V$ is naturally an $A$-$A$-bimodule. Then there is an isomorphism $T_A(V)\simeq k\mathcal{C}$ of algebras.
\end{prop}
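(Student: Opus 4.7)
The plan is to construct the isomorphism degree by degree, using Lemma~\ref{lem:bi-prod} repeatedly to identify the $n$-fold tensor power $V^{\otimes_A n}$ with the span of the hom-sets along all paths of length $n$, and then to check that the multiplication on each side (tensor product on one, path concatenation on the other) agrees.

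First I would spell out the bimodule structure. Since $A=\prod_{x\in Q_0} kX(x)$ is a product, the idempotents $1_{kX(x)}\in A$ decompose any $A$-bimodule into pieces indexed by pairs $(x,y)\in Q_0\times Q_0$. For each arrow $\alpha\in Q_1$, the summand $kX(\alpha)$ of $V$ lies in the $(t(\alpha),s(\alpha))$-piece, and its bimodule structure as a summand of $V$ is the one coming from its $(kX(t(\alpha)),kX(s(\alpha)))$-bimodule structure, with all other idempotents acting as zero. Consequently for $n\geq 1$ one has
\[
V^{\otimes_A n}\;=\;\bigoplus_{(\alpha_n,\dots,\alpha_1)}\; kX(\alpha_n)\otimes_A\cdots\otimes_A kX(\alpha_1),
\]
and the summand indexed by $(\alpha_n,\dots,\alpha_1)$ is zero unless $s(\alpha_{i+1})=t(\alpha_i)$ for $1\leq i<n$, i.e.\ unless $\alpha_n\cdots\alpha_1$ is a path. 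For such a tuple, the middle idempotents $1_{kX(t(\alpha_i))}$ force each $\otimes_A$ to reduce to $\otimes_{kX(t(\alpha_i))}$.

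Next I would apply Lemma~\ref{lem:bi-prod} iteratively to identify the surviving summands with $kX(p)$ for $p=\alpha_n\cdots\alpha_1$. A straightforward induction on $n$, using the associativity of $\otimes_A$ on the algebra side and the associativity of the biset product $\times_H$ on the set side, together with the natural isomorphism (\ref{iso:1}), gives a canonical bimodule isomorphism
\[
kX(\alpha_n)\otimes_{kX(t(\alpha_{n-1}))}\cdots\otimes_{kX(t(\alpha_1))} kX(\alpha_1)\;\overset{\sim}{\longrightarrow}\; kX(p).
\]
Summing over all paths $p$ of length $n$ and over all $n\geq 0$ (with $V^{\otimes_A 0}=A=\bigoplus_{x\in Q_0} kX(e_x)$ corresponding to paths of length zero), I obtain a $k$-linear isomorphism
\[
\Phi\colon T_A(V)\;\overset{\sim}{\longrightarrow}\;\bigoplus_{p\text{ path in }Q} kX(p)\;=\;k\mathcal{C}.
\]

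It remains to verify that $\Phi$ is multiplicative. On homogeneous elements this amounts to unwinding the definitions: the product in $T_A(V)$ takes $(x_n\otimes\cdots\otimes x_1)\otimes(y_m\otimes\cdots\otimes y_1)$ to $x_n\otimes\cdots\otimes x_1\otimes y_m\otimes\cdots\otimes y_1$, and $\Phi$ sends this to the class $(x_n,\dots,x_1,y_m,\dots,y_1)$ in $X(pq)$; on the $k\mathcal{C}$ side, the composition of the corresponding morphisms is precisely this element via (\ref{iso:1}). The case when $s(p)\neq t(q)$ is also consistent, as both sides vanish (on the tensor side because the middle idempotents kill the product, on the category side because the morphisms are not composable). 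I expect the only mildly subtle step to be the careful bookkeeping in the iterated application of Lemma~\ref{lem:bi-prod}, making sure the tensor over $A$ really collapses to the tensor over the appropriate vertex group algebra; once that is in hand, both the $k$-linear isomorphism and the algebra compatibility follow cleanly.
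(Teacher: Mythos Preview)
Your proposal is correct and follows essentially the same route as the paper: decompose $V^{\otimes_A n}$ via the vertex idempotents into pieces indexed by paths, and then apply Lemma~\ref{lem:bi-prod} iteratively to identify each piece with $kX(p)$. The only organizational difference is that the paper first invokes the universal property of the tensor algebra to obtain a graded algebra homomorphism $\phi\colon T_A(V)\to k\mathcal{C}$ (so multiplicativity is automatic) and then checks that $\phi$ restricts to the composite isomorphism on each degree, whereas you build the $k$-linear isomorphism first and verify multiplicativity by hand; the paper's ordering saves you that last check.
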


Here, $T_A(V)=A\oplus V\oplus V^{\otimes 2} \oplus \cdots \oplus V^{\otimes n} \oplus \cdots $ denotes the tensor algebra, where $V^{\otimes n}=V\otimes_A\cdots \otimes_A V$ is the $n$-fold tensor product. The tensor algebra $T_A(V)$ is naturally $\mathbb{N}$-graded.

\begin{proof}
We observe that $k\mathcal{C}=\bigoplus_{n\geq 0} (k\mathcal{C})_n$ is naturally $\mathbb{N}$-graded, where $(k\mathcal{C})_n=k(\bigsqcup_{p\in Q_n}X(p))$. Hence, we have $(k\mathcal{C})_0=A$ and $(k\mathcal{C})_1=V$. By the universal property of tensor algebras, there is a unique  homomorphism $\phi\colon T_A(V)\rightarrow k\mathcal{C}$ between $\mathbb{N}$-graded algebras such that its restriction to $A\oplus V$ is the identity.

It suffices to prove that $\phi$ induces an isomorphism $V^{\otimes n}\simeq (k\mathcal{C})_n$ for each $n\geq 2$. For this, we observe  $V=\bigoplus_{x, y\in Q_0} {_yV_x}$, where
$$_yV_x=k(\bigsqcup_{\{\alpha\in Q_1\; |\; s(\alpha)=x, t(\alpha)=y\}} X(\alpha))$$
 is naturally a $kX(y)$-$kX(x)$-bimodule. We have the following isomorphisms
\begin{align*}
& V^{\otimes n} \simeq \bigoplus_{x_0, x_1, \cdots, x_n\in Q_0} {_{x_n}V_{x_{n-1}}} \otimes_{kX(x_{n-1})} \cdots \otimes_{kX(x_2)} {_{x_2}V_{x_1}} \otimes_{kX(x_1)} {_{x_1}V_{x_0}}\\
&\simeq \bigoplus_{\{\alpha_i\in Q_1\; |\; t(\alpha_i)=s(\alpha_{i+1})\}} k(X(\alpha_n) \times_{X(t(\alpha_{n-1}))} \cdots \times_{X(t(\alpha_2))} X(\alpha_2)\times_{X(t(\alpha_1))} X(\alpha_1))\\
&=\bigoplus_{p\in Q_n} kX(p)\simeq (k\mathcal{C})_n,
\end{align*}
where  the second isomorphism follows by applying Lemma \ref{lem:bi-prod} repeatedly. Since the restriction of $\phi$ on $V^{\otimes n}$ coincides with this composite isomorphism, we are done.
\end{proof}

\begin{rem}
Assume that the above assignment $X$ is trivial, that is, all sets $X(x)$ and $X(\alpha)$ are consisting of single elements. Then the category algebra $k\mathcal{C}$ coincides with the path algebra $kQ$ of the quiver $Q$. The isomorphism in Proposition~\ref{prop:iso} in this case is well known; see \cite[Proposition III.1.3]{ARS}.
\end{rem}

\section{The algebras associated to Cartan matrices}

In this section, we recall the algebras defined in \cite{GLS}, which are associated to symmetrizable generalized  Cartan matrices. For two nonzero integers $l,m \in \mathbb{Z}$, we denote by ${\rm gcd}(l,m)$ their greatest common divisor, which is always assumed to be positive.

Let $n\geq 1$ be a positive integer. A matrix $C=(c_{ij})\in M_n(\mathbb{Z})$ is called a \emph{symmetrizable generalized Cartan matrix} provided that the following conditions hold:
\begin{enumerate}
	\item[(C1)] $c_{ii}=2$ for all $i$;
	\item[(C2)] $c_{ij}\leq 0$ for all $i\neq j$, and $c_{ij}<0$ if and only if $c_{ji}<0$;
	\item[(C3)] There is a diagonal  matrix $D={\rm diag}(c_1,\cdots,c_n)\in M_n(\mathbb{Z})$ with $c_i\geq 1$ for all $i$ such that the product matrix $DC$ is symmetric.
\end{enumerate}

The matrix $D$ appearing in (C3) is called a \emph{symmetrizer} of $C$. From now on, by a \emph{Cartan matrix} we always mean a symmetrizable generalized Cartan matrix.

Let $C=(c_{ij})\in M_n(\mathbb{Z})$ be a Cartan matrix. An \emph{orientation} of $C$ is a subset $\Omega\subset \{1,2,\cdots,n\}\times \{1,2,\cdots,n\}$ such that the following conditions hold:
\begin{enumerate}
	\item $\{(i,j),(j,i)\}\cap \Omega\neq \emptyset$ if and only if $c_{ij}<0$;
	\item For each sequence ($(i_1,i_2),(i_2,i_3),\cdots,(i_t,i_{t+1})$) with $t\geq 1$ and $(i_s,i_{s+1})\in \Omega$ for all $1\leq s\leq t$,  we have $i_1\neq i_{t+1}$.
\end{enumerate}
For an orientation $\Omega$ of $C$,  let $Q=Q(C,\Omega)$ be the finite quiver with the set of vertices $Q_0:=\{1,2, \cdots,n\}$ and with the set of arrows
\[Q_1:=\{\alpha^{(g)}_{ij}:j\rightarrow i\mid(i,j)\in \Omega, 1\leq g\leq {\rm gcd}(c_{ij},c_{ji})\}\cup\{\varepsilon_i:i\rightarrow i\mid 1\leq i\leq n\}.\]
 We call $Q$ a \emph{quiver of type $C$}. Let $Q^{\circ}=Q^{\circ}(C,\Omega)$ be the quiver obtained from $Q$ by deleting all loops $\varepsilon_i$. Then $Q^{\circ}$ is a finite acyclic quiver.

 We will call $(C, D, \Omega)$ a \emph{Cartan triple}, where $C$ is a Cartan matrix, $D$ its symmetrizer and $\Omega$ an orientation of $C$.

\begin{defn}{\rm (\cite[Section 1.4]{GLS})}\label{defn:GLS}
Let $k$ be a field, and $(C, D, \Omega)$ be a Cartan triple.  Then we have the quiver $Q=Q(C,\Omega)$. Let
\[H=H(C,D,\Omega)=kQ/I,\]
where $kQ$ is the path algebra of $Q$, and $I$ is the two-sided ideal of $kQ$ defined by the following relations:
\begin{enumerate}
	\item[(H1)] For each vertex $i$, we have the \emph{nilpotency relation}
	\[\varepsilon_i^{c_i}=0.\]
	\item[(H2)] For each $(i,j)\in \Omega$ and each $1\leq g\leq {\rm gcd}(c_{ij},c_{ji})$, we have the \emph{commutativity relation}
	\[\varepsilon_i^{\frac{c_i}{{\rm gcd}(c_i,c_j)}} \alpha^{(g)}_{ij}=\alpha^{(g)}_{ij}\varepsilon_j^{\frac{c_j}{{\rm gcd}(c_i,c_j)}}.\]
\end{enumerate}
\end{defn}

\begin{rem}\label{rem:GLS}
Assume that the Cartan matrix $C$ is symmetric. We take $D=cI_n$ to be a scalar matrix for $c\geq 1$. Then the algebra $H(C, D, \Omega)$ is isomorphic to $kQ^\circ\otimes k[\varepsilon]/(\varepsilon^c)$. This algebra is studied in \cite{Fan, RZ} from different perspectives. The main concern of \cite{GLS} is non-symmetric Cartan matrices.
\end{rem}

Let us introduce some notation. Fix  the Cartan triple $(C, D, \Omega)$  as above. For each $1\leq i\leq n$, set $H_i=k[\epsilon_i]/(\epsilon_i^{c_i})$. For each $(i,j)\in \Omega$, set $H_{ij}=k[\epsilon_{ij}]/(\epsilon_{ij}^{{\rm gcd}(c_i,c_j)})$. Consider the following injective algebra homomorphisms
\[H_{ij}\rightarrow H_i,\ \ \ \epsilon_{ij}\mapsto \epsilon_i^{\frac{c_i}{{\rm gcd}(c_i,c_j)}}\]
and
\[H_{ij}\rightarrow H_j,\ \ \epsilon_{ij}\mapsto \epsilon_j^{\frac{c_j}{{\rm gcd}(c_i,c_j)}}.\]
For each $1\leq g\leq {\rm gcd}(c_{ij},c_{ji})$, set $_iH_j^{(g)}:= H_i\otimes_{H_{ij}} H_j$, which is naturally an $H_i$-$H_j$-bimodule. The element $1\otimes 1$ in $_iH_j^{(g)}$ will be denoted by $\beta^{(g)}_{ij}$.
Set $B=\prod\limits_{i=1}^{n} H_i$ and $W=\bigoplus_{(i,j)\in \Omega}  \bigoplus_{g=1}^{{\rm gcd}(c_{ij},c_{ji})} {_iH_j^{(g)}}$. Then $W$ is naturally a $B$-$B$-bimodule.

The following result is essentially due to \cite[Proposition 6.4]{GLS}. We make slight modification and give a complete proof. Recall the notation $H=H(C, D, \Omega)$.

\begin{prop}\label{prop:H}
Keep the notation as above. Then there is an isomorphism of algebras
\[T_B(W)\stackrel{\sim}\longrightarrow H,\]
which induces an isomorphism of algebras
\[H_i\simeq e_iHe_i\]
for each $1\leq i\leq n$, and an isomorphism of $H_i$-$H_j$-bimodules
\[{_iH_j^{(g)}}\simeq {\rm Span}_k\{ \varepsilon_i^{n_i}\alpha^{(g)}_{ij}\varepsilon^{n_j}_j\ \mid\  0\leq n_i\leq c_i-1,0\leq n_j\leq c_j-1 \}\]
for each $(i,j)\in \Omega$ and $1\leq g\leq {\rm gcd}(c_{ij},c_{ji})$.
\end{prop}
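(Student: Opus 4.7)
\emph{The plan} is to construct mutually inverse algebra homomorphisms $\Phi\colon T_B(W)\to H$ and $\Psi\colon H\to T_B(W)$; the two other claimed isomorphisms then fall out by restricting $\Phi$ to the appropriate summands of $B$ and $W$.

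To build $\Phi$, I would invoke the universal property of the tensor algebra. The base map $\phi_0\colon B=\prod_i H_i\to H$ sends the unit of $H_i$ to $e_i$ and $\epsilon_i$ to $\varepsilon_i$; it is well-defined since $\varepsilon_i\in e_iHe_i$ and $\varepsilon_i^{c_i}=0$ by (H1). For the bimodule map $\phi_1\colon W\to H$, on each summand ${}_iH_j^{(g)}=H_i\otimes_{H_{ij}}H_j$ I would set $\phi_1(a\otimes b)=\phi_0(a)\,\alpha^{(g)}_{ij}\,\phi_0(b)$. The only point to check is that the bilinear map $(a,b)\mapsto \phi_0(a)\alpha^{(g)}_{ij}\phi_0(b)$ is $H_{ij}$-balanced, which is precisely the commutativity relation (H2). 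The universal property of $T_B(W)$ then produces the algebra homomorphism $\Phi$.

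For the inverse, I would define a homomorphism $\widetilde\Psi\colon kQ\to T_B(W)$ from the path algebra by $e_i\mapsto 1_{H_i}$, $\varepsilon_i\mapsto \epsilon_i$, and $\alpha^{(g)}_{ij}\mapsto \beta^{(g)}_{ij}$, and then check that the relations in $I$ lie in its kernel. The nilpotency (H1) is immediate from $\epsilon_i^{c_i}=0$ in $H_i$; the commutativity (H2) follows from the chain $\epsilon_i^{c_i/\gcd(c_i,c_j)}\otimes 1=\epsilon_{ij}\otimes 1=1\otimes\epsilon_{ij}=1\otimes\epsilon_j^{c_j/\gcd(c_i,c_j)}$ inside ${}_iH_j^{(g)}$, which, read in $T_B(W)$, becomes $\epsilon_i^{c_i/\gcd(c_i,c_j)}\cdot\beta^{(g)}_{ij}=\beta^{(g)}_{ij}\cdot\epsilon_j^{c_j/\gcd(c_i,c_j)}$. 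This yields $\Psi\colon H\to T_B(W)$. Since $\Phi\Psi$ and $\Psi\Phi$ fix the respective generating sets, they are identities, proving $T_B(W)\simeq H$. Restricting $\Phi$ to $H_i\subseteq B$ gives $H_i\simeq e_iHe_i$, and to ${}_iH_j^{(g)}\subseteq W$ gives the isomorphism onto $\mathrm{Span}_k\{\varepsilon_i^{n_i}\alpha^{(g)}_{ij}\varepsilon_j^{n_j}\}$, since $\phi_1(\epsilon_i^{n_i}\otimes\epsilon_j^{n_j})=\varepsilon_i^{n_i}\alpha^{(g)}_{ij}\varepsilon_j^{n_j}$.

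The whole argument is essentially formal once one observes that the bimodule $H_i\otimes_{H_{ij}}H_j$ is designed so that $\epsilon_{ij}\in H_{ij}$ acts on $H_i$ as multiplication by $\varepsilon_i^{c_i/\gcd(c_i,c_j)}$ and on $H_j$ as multiplication by $\varepsilon_j^{c_j/\gcd(c_i,c_j)}$; that is, the identification which makes the tensor product well-defined is precisely the content of (H2). The only substantive step is this single compatibility check, and I do not expect any combinatorial or dimension-counting obstacle beyond it.
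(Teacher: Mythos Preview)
Your proposal is correct and follows essentially the same route as the paper: construct $\Phi$ from the universal property of $T_B(W)$ by checking that the bilinear map $(a,b)\mapsto \phi_0(a)\alpha^{(g)}_{ij}\phi_0(b)$ is $H_{ij}$-balanced via (H2), build the inverse $\Psi$ on generators of $kQ$ and verify (H1)--(H2) via the identity $\epsilon_i^{c_i/\gcd(c_i,c_j)}\beta^{(g)}_{ij}=\beta^{(g)}_{ij}\epsilon_j^{c_j/\gcd(c_i,c_j)}$, then conclude by checking both composites on generators. The paper's proof is the same argument with only cosmetic differences in presentation.
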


Here, $e_i$ denotes the idempotent of $H$ corresponding to the vertex $i$. The notation ``Span$_k$" means the subspace of $H$ spanned by the mentioned elements.

\begin{proof}
There is an algebra homomorphism $\phi_0\colon B\rightarrow H$ sending $\epsilon_i$ to $\varepsilon_i$. Moreover, each $(i,j)\in \Omega$ and $1\leq g\leq {\rm gcd}(c_{ij},c_{ji})$,  there is a bilinear map  $H_i\times H_j \rightarrow e_iHe_j$ sending $(\epsilon_i^a, \epsilon_j^b)$ to $\varepsilon_i^a\alpha_{ij}^{(g)}\varepsilon_j^b$. By the commutativity relations in $H$, this map is $H_{ij}$-balanced. In particular, we have an induced $H_i$-$H_j$-bimodule homomorphism $_iH_j^{(g)}\rightarrow e_iHe_j$ sending $\beta_{ij}^{(g)}$ to $\alpha_{ij}^{(g)}$. These bimodule homomorphisms give rise to a $B$-$B$-bimodule homomorphism $\phi_1\colon W\rightarrow H$. By the universal property of tensor algebras, we have an algebra homomorphism $\phi\colon T_B(W)\rightarrow H$ extending $\phi_0$ and $\phi_1$.

On the other hand, there is a unique algebra homomorphism $\psi\colon H\rightarrow T_B(W)$ such that $\psi(e_i)=1_{H_i}$, $\psi(\varepsilon_i)=\epsilon_i$ and $\psi(\alpha^{(g)}_{ij})=\beta^{(g)}_{ij}$. It is direct to verify the defining relations of $H$ in $T_B(W)$, using the identity
$$\epsilon_i^{\frac{c_i}{{{\rm gcd}(c_i, c_j)}}}\beta_{ij}^{(g)}=\beta_{ij}^{(g)} \epsilon_j^{\frac{c_j}{{\rm gcd}(c_i, c_j)}}$$
 in $_iH_{j}^{(g)}$. Moreover, we have $\psi\circ \phi={\rm Id}_{T_B(W)}$ and $\phi\circ \psi={\rm Id}_H$, since they hold trivially on generators. Then we are done.
\end{proof}

\section{The category algebra of a free EI category of Cartan type}

In this section, we introduce a free EI category, which is associated to a Cartan triple. The main result states that the corresponding category algebra is isomorphic to the algebra associated to another Cartan triple; see Theorem \ref{thm:main}.

\subsection{Free EI categories of Cartan type}

Let $n\geq 1$ and $(C=(c_{ij}), D={\rm diag}(c_1, c_2, \cdots, c_n), \Omega)$ be a Cartan triple. Recall that $Q=Q(C, \Omega)$ is the quiver of type $C$, and that $Q^\circ$ is the quiver obtained from $Q$ by deleting all loops $\varepsilon_i$.

We will define a finite EI quiver $(Q^\circ, X)$. Recall that $Q^\circ_0=\{1, 2, \cdots, n\}$ and $Q^\circ_1=\{\alpha_{ij}^{(g)}\colon j\rightarrow i\; |\; (i, j)\in \Omega, 1\leq g\leq {\rm gcd}(c_{ij}, c_{ji})\}$. The assignment $X$ is given such that $X(i)=\langle \eta_i\; |\; \eta_i^{c_i}=1\rangle$ is a cyclic group of order $c_i$. For each $(i, j)\in \Omega$, we set $G_{ij}=\langle \eta_{ij}\; |\; \eta_{ij}^{{\rm gcd}(c_i, c_j)}=1\rangle$ to be a cyclic group of order ${\rm gcd}(c_i, c_j)$. There are injective group homomorphisms
\[G_{ij}\longrightarrow X(i), \;  \eta_{ij}\mapsto \eta_i^{\frac{c_i}{{\rm gcd}(c_i, c_j)}}\]
and
\[G_{ij}\longrightarrow X(j), \;  \eta_{ij}\mapsto \eta_j^{\frac{c_j}{{\rm gcd}(c_i, c_j)}}.\]
Then we have the $(X(i), X(j))$-biset $X(i)\times_{G_{ij}} X(j)$. We set
$$X(\alpha_{ij}^{(g)})=X(i)\times_{G_{ij}} X(j)$$
 for each $1\leq g\leq {\rm gcd}(c_{ij}, c_{ji})$. We observe that both the left $X(i)$-action and right $X(j)$-action on $X(\alpha_{ij}^{(g)})$ are free.

\begin{defn}\label{defn:EI}
For a Cartan triple $(C, D, \Omega)$, the free EI category $\mathcal{C}(Q^\circ, X)$ associated to the above finite EI quiver $(Q^\circ, X)$ will be denoted by $\mathcal{C}(C,D, \Omega)$, which will be called the \emph{free EI category of type} $C$.
\end{defn}

We give an intrinsic characterization of free EI categories of Cartan type.

Let $\mathcal{C}$ be a finite EI category. Recall that a non-isomorphism  $\alpha$ is called \emph{unfactorizable}, provided that $\alpha_1$ or $\alpha_2$ is an isomorphism in any factorization $\alpha=\alpha_1\circ \alpha_2$. Denote by ${\rm Hom}_\mathcal{C}^0(x, y)$ the set of unfactorizable morphisms between two objects $x$ and $y$; it is naturally an $({\rm Aut}(y), {\rm Aut}(x))$-biset. For example, if $\mathcal{C}=\mathcal{C}(C,D, \Omega)$, we have
$${\rm Hom}_\mathcal{C}^0(j, i)=\bigsqcup_{1\leq g\leq {\rm gcd}(c_{ij}, c_{ji})} X(\alpha_{ij}^{(g)}).$$
In particular, the category $\mathcal{C}(C,D, \Omega)$ satisfies the conditions (EC1)-(EC3) below.

\begin{prop}
Let $\mathcal{C}$ be a finite free EI category. Assume that the following conditions are satisfied:
\begin{enumerate}
\item[(EC1)] ${\rm Obj}(\mathcal{C})=\{x_1, x_2, \cdots, x_n\}$ and ${\rm Aut}(x_i)=\langle \eta_i\; |\; \eta_i^{c_i}=1\rangle$ is cyclic of order $c_i$ for each $i$;
\item[(EC2)] The left ${\rm Aut}(x_i)$-action and right ${\rm Aut}(x_j)$-action on ${\rm Hom}^0_\mathcal{C}(x_j, x_i)$ are both free for each $i, j$.
    \item[(EC3)] $\eta_i^{\frac{c_i}{{\rm gcd}(c_i,c_j)}} \circ \alpha=\alpha \circ \eta_j^{\frac{c_j}{{\rm gcd}(c_i,c_j)}}$ for each $\alpha\in {\rm Hom}^0_\mathcal{C}(x_j, x_i)$.
\end{enumerate}
Then $\mathcal{C}$ is isomorphic to $\mathcal{C}(C, D, \Omega)$ for some Cartan triple $(C, D, \Omega)$.
\end{prop}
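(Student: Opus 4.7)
The plan is to extract a finite EI quiver from $\mathcal{C}$ and to identify it, via an appropriate Cartan triple, with the finite EI quiver built in Subsection~4.1. Since $\mathcal{C}$ is free, I would first fix an isomorphism $\mathcal{C}\simeq \mathcal{C}(Q,X)$ for some finite EI quiver $(Q,X)$; by (EC1), we may identify $Q_0$ with $\{x_1,\dots,x_n\}$ and $X(x_i)=\langle \eta_i\mid \eta_i^{c_i}=1\rangle$. Set $D={\rm diag}(c_1,\dots,c_n)$. Finiteness together with the EI property forces $Q$ to be acyclic and loop-free, so it will play the role of $Q^\circ$ in Definition~\ref{defn:EI}. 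What remains is to reconstruct the Cartan matrix $C$ and the orientation $\Omega$ from the biset structure of the sets ${\rm Hom}^0_\mathcal{C}(x_j,x_i)$.

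The crux will be a stabilizer analysis of ${\rm Hom}^0_\mathcal{C}(x_j,x_i)$ viewed as a left $X(i)\times X(j)^{\rm op}$-set, for $i\neq j$. For an unfactorizable $\alpha$, consider ${\rm Stab}(\alpha)=\{(g,h)\in X(i)\times X(j)\mid g\alpha=\alpha h\}$. By (EC2), the two projections from ${\rm Stab}(\alpha)$ to $X(i)$ and to $X(j)$ are both injective, so ${\rm Stab}(\alpha)$ is cyclic of some order $d'$ dividing $\gcd(c_i,c_j)$. By (EC3), setting $d=\gcd(c_i,c_j)$, the element $(\eta_i^{c_i/d},\eta_j^{c_j/d})$ lies in ${\rm Stab}(\alpha)$ and has order exactly $d$. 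Hence $d'=d$ and ${\rm Stab}(\alpha)=\langle(\eta_i^{c_i/d},\eta_j^{c_j/d})\rangle$, which identifies the orbit of $\alpha$ with the biset $X(i)\times_{G_{ij}} X(j)$ via precisely the embeddings fixed in Subsection~4.1. Writing $m_{ij}$ for the number of orbits of ${\rm Hom}^0_\mathcal{C}(x_j,x_i)$, this biset decomposes as a disjoint union of $m_{ij}$ copies of $X(i)\times_{G_{ij}} X(j)$.

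With this in hand, I would define the Cartan data by $c_{ii}=2$ and, for $i\neq j$, by setting $m=m_{ij}+m_{ji}$ (at most one summand is nonzero by acyclicity of $Q$) and $c_{ij}=-m c_j/\gcd(c_i,c_j)$. Then $c_i c_{ij}=c_j c_{ji}$, so $DC$ is symmetric, and $\gcd(|c_{ij}|,|c_{ji}|)=m$ because $c_i/d$ and $c_j/d$ are coprime. Let $\Omega=\{(i,j)\mid m_{ij}>0\}$; the first orientation axiom then reads $c_{ij}<0 \Leftrightarrow m>0 \Leftrightarrow \{(i,j),(j,i)\}\cap\Omega\neq\emptyset$, and the acyclicity clause transfers directly from that of $Q$. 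Since the decomposition of ${\rm Hom}^0_\mathcal{C}(x_j,x_i)$ now matches the one used to build $\mathcal{C}(C,D,\Omega)$ in Subsection~4.1, the finite EI quivers $(Q,X)$ and $(Q^\circ(C,\Omega),X)$ are isomorphic, yielding $\mathcal{C}\simeq\mathcal{C}(C,D,\Omega)$.

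The main obstacle is the stabilizer computation: (EC2) must deliver the upper bound and (EC3) the matching lower bound, and one must then check that the resulting transitive biset is isomorphic to $X(i)\times_{G_{ij}} X(j)$ with the \emph{exact} embeddings fixed in Subsection~4.1, not merely a biset of the same cardinality. Everything else---defining $C$, verifying the Cartan and orientation axioms, refining the $X(\alpha)$'s so that quiver arrows correspond to single orbits, and extending the biset isomorphisms to a category isomorphism---is essentially bookkeeping.
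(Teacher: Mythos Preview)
Your proposal is correct and follows essentially the same route as the paper. The paper decomposes ${\rm Hom}^0_\mathcal{C}(x_j,x_i)$ into $m_{ij}$ biset orbits, defines $(C,D,\Omega)$ exactly as you do, and then shows each orbit is isomorphic to $X(i)\times_{G_{ij}}X(j)$; the only cosmetic difference is that where you compute the stabilizer (using (EC2) for the upper bound and (EC3) for the lower bound), the paper instead writes down the explicit surjection $\phi\colon X(\alpha_{ij}^{(l)})\to {\rm Aut}(x_i)\alpha_{ij}^{(l)}{\rm Aut}(x_j)$, checks it is well defined by (EC3), and infers bijectivity from the cardinality bound coming from (EC2)---the two arguments are dual formulations of the same orbit-stabilizer count. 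The paper also short-circuits your ``refine the $X(\alpha)$'s'' step by citing \cite[Proposition~2.9]{L2011}, which says a free EI category is determined by its automorphism groups and its bisets of unfactorizable morphisms.
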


\begin{proof}
We write the $({\rm Aut}(x_i), {\rm Aut}(x_j))$-biset ${\rm Hom}_\mathcal{C}^0(x_j, x_i)$ as a disjoint union of orbits
$${\rm Hom}_\mathcal{C}^0(x_j, x_i)=\bigsqcup_{l=1}^{m_{ij}} {\rm Aut}(x_i)\alpha_{ij}^{(l)} {\rm Aut}(x_j),$$
where $\{\alpha^{(l)}_{ij}\; |\; 1\leq l\leq m_{ij}\}$ is a complete representative set of orbits. Set $m_{ij}=0$ if ${\rm Hom}_\mathcal{C}^0(x_j, x_i)=\emptyset$.

We define a Cartan matrix $C=(c_{ij})\in M_n(\mathbb{Z})$ as follows: $c_{ii}=2$ for $1\leq i\leq n$; if both $m_{ij}$ and $m_{ji}$ are zero, we set $c_{ij}=0=c_{ji}$;  if $m_{ij}>0$, we set $c_{ij}= - \frac{c_j}{{\rm gcd}(c_i,c_j)}\cdot m_{ij}$ and  $c_{ji}= - \frac{c_i}{{\rm gcd}(c_i,c_j)}\cdot m_{ij}$, in which case we have $m_{ij}={\rm gcd}(c_{ij}, c_{ji})$.  Set $D={\rm diag}(c_1,c_2, \cdots,c_n)$, and $\Omega=\{(i,j)\; |\; {\rm Hom}_\mathcal{C}^0(x_j,x_i)\neq \emptyset \}$. Then $(C, D, \Omega)$ is a Cartan triple.

We claim that  $\mathcal{C}\simeq \mathcal{C}(C,D,\Omega)$. Recall from \cite[Proposition 2.9]{L2011} that free EI categories are completely determined by the automorphism groups of objects and the bisets of unfactorizable morphisms. It suffices to prove that for each $1\leq l\leq m_{ij}$,  the $({\rm Aut}(x_i), {\rm Aut}(x_j))$-bisets $X(\alpha_{ij}^{(l)})$ and ${\rm Aut}(x_i)\alpha_{ij}^{(l)} {\rm Aut}(x_j)$ are isomorphic. Indeed by (EC3), the following biset map
$$\phi\colon X(\alpha_{ij}^{(l)})\longrightarrow {\rm Aut}(x_i)\alpha_{ij}^{(l)} {\rm Aut}(x_j), \quad (\eta_i^a, \eta_j^b)\mapsto \eta_i^a\circ \alpha_{ij}^{(l)}\circ \eta_j^b$$
is well-defined, which is surjective. The cardinality of $X(\alpha_{ij}^{(l)})$ equals $\frac{c_ic_j}{{\rm gcd}(c_i, c_j)}$.   By (EC2), both $c_i$ and $c_j$ divide the cardinality of ${\rm Aut}(x_i)\alpha_{ij}^{(l)} {\rm Aut}(x_j)$. It follows that $\phi$ is bijective. This proves the claim,  and completes the proof.
\end{proof}

\subsection{The construction of $(C', D', \Omega')$}\label{sub:4.2}

In what follows, we assume that the base field $k$ has \emph{enough roots of unity}, that is, any polynomial $t^a-1$ in $k[t]$ splits for each $a\geq 2$.

We assume first that  ${\rm char} k=p>0$. We fix a Cartan triple $(C, D, \Omega)$ as above. Assume that $$c_i=p^{r_i}d_i$$
 with $r_i\geq 0$ and ${\rm gcd}(p, d_i)=1$.

Set $m=\sum_{i=1}^n d_i$, which is the cardinality of the set
$$M=\bigsqcup_{1\leq i\leq n} \{(i, l_i)\; |\; 0\leq l_i<d_i\}.$$
For each $1\leq i, j\leq n$, we set
\[\Sigma_{ij}=\{ (l_i,l_j)\; |\;  0\leq l_i< d_i, 0\leq l_j< d_j, l_ip^{r_i}\equiv l_jp^{r_j} ({\rm mod}\ {\rm gcd}(d_i, d_j))\}.\]
We will define a Cartan matrix $C'\in M_m(\mathbb{Z})$, whose rows and columns are indexed by $M$. The diagonal entries of $C'$ are $2$, while the off-diagonal entries are given as follows
 \[c'_{(i, l_i), (j, l_j)}=\begin{cases}
-{\rm gcd}(c_{ij},c_{ji}) p^{r_j-{\rm min}(r_i, r_j)}, & \text{ if } (l_i, l_j)\in \Sigma_{ij}; \\
0, & \text{otherwise.}
\end{cases}\]
In particular, $c'_{(i, l_i), (i, l'_i)}=0$ for $l_i\neq l'_i$. Let $D'$ be a diagonal matrix of rank $m$, whose $(i, l_i)$-th component is given by $p^{r_i}$. Since
$$p^{r_i}c'_{(i, l_i),(j, l_j)}=p^{r_j} c'_{(j, l_j),(i, l_i)},$$
the product matrix $D'C'$ is symmetric. Set
$$\Omega'=\{((i, l_i),(j, l_j))\; |\; (i, j)\in \Omega, (l_i, l_j)\in \Sigma_{ij}\},$$
which is an orientation of $C'$. Then we have the required Cartan triple $(C', D', \Omega')$. We point out that a similar construction is given in \cite[Section 14.1]{Lus} and \cite[Lemma~21]{Hu04}. We refer to Section \ref{sec:6} for further study on this construction.

We observe that the above definitions work well if ${\rm char}k=0$, in which case we put $r_i=0$ and $d_i=c_i$. Then the Cartan matrix $C'$ is symmetric and the symmetrizer $D'$  is the identity matrix.

The main result provides an explicit isomorphism between certain category algebras and the algebras studied in \cite{GLS}. It motivates the notion of a free EI category of Cartan type.

 \begin{thm}\label{thm:main}
  Let $(C, D, \Omega)$ be a Cartan triple and   $\mathcal{C}(C, D, \Omega)$ as in Definition $\ref{defn:EI}$. Assume that $k$ has enough roots of unity. Consider the Cartan triple $(C', D', \Omega')$ as above and the algebra $H(C', D', \Omega')$ in Definition $\ref{defn:GLS}$. Then there is an isomorphism of algebras
  $$k\mathcal{C}(C, D, \Omega)\simeq H(C', D', \Omega').$$
 \end{thm}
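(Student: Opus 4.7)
The plan is to reduce the theorem to an identification of the two sides as tensor algebras with matching data in degrees zero and one. By Proposition~\ref{prop:iso}, we have $k\mathcal{C}(C,D,\Omega)\simeq T_A(V)$ with $A=\prod_i kX(i)$ and $V=\bigoplus_{\alpha\in Q^\circ_1} kX(\alpha)$, while by Proposition~\ref{prop:H} we have $H(C',D',\Omega')\simeq T_{B}(W)$ with $B=\prod_{(i,l_i)\in M}H_{(i,l_i)}$ and $W=\bigoplus_{((i,l_i),(j,l_j))\in\Omega',\, g} {_{(i,l_i)}H_{(j,l_j)}^{(g)}}$. It therefore suffices to exhibit an algebra isomorphism $A\simeq B$ together with a compatible $A$-$A$-bimodule isomorphism $V\simeq W$; the universal property of the tensor algebra then produces the required algebra isomorphism.

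The first step is to decompose the base algebra. For each $i$, write $c_i=p^{r_i}d_i$ with $\gcd(p,d_i)=1$ (with $r_i=0$ if $\mathrm{char}\,k=0$). Since $k$ has enough roots of unity, fix a primitive $d_i$-th root $\zeta_i\in k$, so that in $k[t]$
\begin{equation*}
t^{c_i}-1=\bigl(t^{d_i}-1\bigr)^{p^{r_i}}=\prod_{l_i=0}^{d_i-1}(t-\zeta_i^{l_i})^{p^{r_i}}.
\end{equation*}
By the Chinese remainder theorem, sending $\eta_i\mapsto t$ and setting $\epsilon_{(i,l_i)}:=t-\zeta_i^{l_i}$ gives an algebra isomorphism
\begin{equation*}
kX(i)=k[\eta_i]/(\eta_i^{c_i}-1)\stackrel{\sim}{\longrightarrow}\prod_{l_i=0}^{d_i-1}k[\epsilon_{(i,l_i)}]/(\epsilon_{(i,l_i)}^{p^{r_i}})=\prod_{l_i=0}^{d_i-1}H_{(i,l_i)}.
\end{equation*}
Taking the product over $i$ yields $A\simeq B$.

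The second and central step is the bimodule decomposition, which is the content of the announced Proposition~\ref{prop:main}. By Lemma~\ref{lem:bi-prod}, for each arrow $\alpha^{(g)}_{ij}$ of $Q^\circ$ we have $kX(\alpha^{(g)}_{ij})\simeq kX(i)\otimes_{kG_{ij}}kX(j)$. Decompose $kG_{ij}$ by the same recipe as for $kX(i)$: since $\gcd(c_i,c_j)=p^{\min(r_i,r_j)}\gcd(d_i,d_j)$, one obtains a block decomposition of $kG_{ij}$ indexed by the $\gcd(d_i,d_j)$-th roots of unity. Chasing the embeddings $G_{ij}\hookrightarrow X(i)$, $\eta_{ij}\mapsto\eta_i^{c_i/\gcd(c_i,c_j)}$ through the decomposition of $kX(i)$, and using the Frobenius identity $(\zeta_i^{l_i}+\epsilon_{(i,l_i)})^{p^{s_i}}=\zeta_i^{l_ip^{s_i}}+\epsilon_{(i,l_i)}^{p^{s_i}}$ with $s_i=r_i-\min(r_i,r_j)$, a direct computation shows that $H_{(i,l_i)}$ lies over the block of $kG_{ij}$ indexed by $l_ip^{s_i}\bmod\gcd(d_i,d_j)$, and that the action of this block on $H_{(i,l_i)}$ coincides, after a coordinate change of $\epsilon_{(i,l_i)}$, with the embedding $H_{(i,l_i),(j,l_j)}\hookrightarrow H_{(i,l_i)}$ used to define $H(C',D',\Omega')$. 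The same holds on the $j$-side, so $H_{(i,l_i)}\otimes_{kG_{ij}}H_{(j,l_j)}$ is nonzero exactly when $l_ip^{r_i}\equiv l_jp^{r_j}\pmod{\gcd(d_i,d_j)}$, i.e.\ $(l_i,l_j)\in\Sigma_{ij}$, and in that case equals $H_{(i,l_i)}\otimes_{H_{(i,l_i),(j,l_j)}}H_{(j,l_j)}={_{(i,l_i)}H_{(j,l_j)}^{(g)}}$. Summing over $(i,j)\in\Omega$, over $g=1,\dots,\gcd(c_{ij},c_{ji})$, and over $(l_i,l_j)\in\Sigma_{ij}$, and observing that $\gcd(c'_{(i,l_i),(j,l_j)},c'_{(j,l_j),(i,l_i)})=\gcd(c_{ij},c_{ji})$ so that the range of $g$ agrees with that in $W$, we obtain $V\simeq W$ as $B$-$B$-bimodules.

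I expect the main obstacle to be the block-matching computation in the bimodule step: one must verify that the image of the central idempotents of $kG_{ij}$ in $kX(i)$ and in $kX(j)$ single out precisely the index set $\Sigma_{ij}$ and that the induced module structures match $H_{(i,l_i),(j,l_j)}$ on the nose (up to the $\epsilon$-reparametrisation). The identities $\gcd(c_i,c_j)=p^{\min(r_i,r_j)}\gcd(d_i,d_j)$ and $e_i=d_i/\gcd(d_i,d_j)$ invertible in $k$ are what make this book-keeping work; once they are in place, the three steps combine, via the universal property of $T_A(V)$, into the desired isomorphism $k\mathcal{C}(C,D,\Omega)\simeq H(C',D',\Omega')$.
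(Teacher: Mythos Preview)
Your overall strategy is exactly that of the paper: reduce both algebras to tensor algebras via Propositions~\ref{prop:iso} and~\ref{prop:H}, then match the degree-zero and degree-one data. The reduction, the use of Lemma~\ref{lem:bi-prod}, the observation that $\gcd(c'_{(i,l_i),(j,l_j)},c'_{(j,l_j),(i,l_i)})=\gcd(c_{ij},c_{ji})$, and the identification of $\Sigma_{ij}$ as the index set for nonzero summands are all correct and mirror the paper's argument.

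The genuine gap lies in your choice of local coordinate. You set $\epsilon_{(i,l_i)}=t-\zeta_i^{l_i}$ and then invoke an unspecified ``coordinate change'' to make the induced $H'_{ij}$-action match the standard one $\epsilon_{ij}\mapsto\epsilon_{(i,l_i)}^{p^{r_i-\min(r_i,r_j)}}$. But this coordinate change is where the real content is, and it cannot be made for each pair $(i,j)$ separately: the isomorphism $\Theta_i\colon kX(i)\to\prod_{l_i}H_{(i,l_i)}$ must be fixed once and for all, independently of $j$. With your choice, the image of $\eta_{ij}\mapsto\eta_i^{c_i/\gcd(c_i,c_j)}$ in the $l_i$-th block is $(\zeta_i^{l_i}+\epsilon_{(i,l_i)})^{p^{s_i}e_i}$ with $e_i=d_i/\gcd(d_i,d_j)$; expanding, the image of the block generator of $kG_{ij}$ becomes $\epsilon_{(i,l_i)}^{p^{s_i}}\cdot u_{ij}$ for a unit $u_{ij}$ that genuinely depends on $e_i$, hence on $j$. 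One can check in small examples (e.g.\ $c_i=2p$, $c_j=p$, $p$ odd) that the resulting bimodule $H_{(i,l_i)}\otimes_{C_m}H_{(j,l_j)}$ is \emph{not} isomorphic, as an $H_{(i,l_i)}$-$H_{(j,l_j)}$-bimodule, to $H_{(i,l_i)}\otimes_{H'_{ij}}H_{(j,l_j)}$ unless one first adjusts $\Theta_i$.

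The paper resolves this by making a different, uniform choice: in the notation of Section~5, one takes $\theta_i\colon\epsilon_i\mapsto x^{a'}-1$ (i.e.\ $\epsilon_{(i,l_i)}\mapsto t^{d_i}-1$) rather than $\epsilon_i\mapsto x-\zeta_1^i$. The paper explicitly remarks, just before Lemma~\ref{lem:comm-diag2}, that the ``obvious'' choice---yours---does not make the relevant diagram commute, and that the choice $x^{a'}-1$ is forced. With this uniform coordinate (which depends only on $i$, not on $j$), Lemmas~\ref{lem:comm-diag1} and~\ref{lem:comm-diag2} give commuting squares that identify both the block-matching and the module structures on the nose, yielding Proposition~\ref{prop:main}. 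So your plan is correct, but the step you flag as the ``main obstacle'' is not just book-keeping: it requires finding the right global reparametrisation, and that is precisely the content of Proposition~\ref{prop:main}.
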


\begin{proof}
We assume that ${\rm char}k=p>0$. Set $\mathcal{C}=\mathcal{C}(C, D, \Omega)$ and $H=H(C', D', \Omega')$.

We observe that for $((i, l_i), (j, l_j))\in \Omega'$, we have
\begin{align}\label{equ:gcd}
{\rm gcd}(c'_{(i, l_i), (j, l_j)}, c'_{(j, l_j), (i, l_i)})={\rm gcd}(c_{ij}, c_{ji}).
\end{align}
By Proposition \ref{prop:iso}, there is an isomorphism of algebras
$$ k\mathcal{C} \stackrel{\sim}\longrightarrow T_A(V),$$
where $A=\prod_{i=1}^{n} kX(i)$ and
\begin{align*}
V&=\bigoplus_{(i,j)\in \Omega} \bigoplus_{g=1}^{{\rm gcd}(c_{ij}, c_{ji})} k(X(i)\times_{G_{ij}} X(j))\\
& \simeq \bigoplus_{(i,j)\in \Omega} \bigoplus_{g=1}^{{\rm gcd}(c_{ij}, c_{ji})} kX(i)\otimes_{kG_{ij}} kX(j).
\end{align*}
Here, the last isomorphism follows from Lemma \ref{lem:bi-prod}.

By Proposition \ref{prop:H}, there is another isomorphism of algebras
$$H\stackrel{\sim}\longrightarrow T_B(W),$$
where $B=\prod_{i=1}^n \prod_{l_i=0}^{d_i-1} H_{(i, l_i)}$ and
\begin{align*}
W &=\bigoplus_{((i, l_i), (j, l_j))\in \Omega'} \bigoplus_{g=1}^{{\rm gcd}(c'_{(i, l_i), (j, l_j)}, c'_{(j, l_j), (i, l_i)})} H_{(i, l_i)}\otimes_{H'_{ij}} H_{(j, l_j)}\\
&= \bigoplus_{(i,j)\in \Omega} \bigoplus_{(l_i, l_j)\in \Sigma_{ij}} \bigoplus_{g=1}^{{\rm gcd}(c_{ij}, c_{ji})} H_{(i, l_i)}\otimes_{H'_{ij}} H_{(j, l_j)}\\
& = \bigoplus_{(i,j)\in \Omega} \bigoplus_{g=1}^{{\rm gcd}(c_{ij}, c_{ji})}    \bigoplus_{(l_i, l_j)\in \Sigma_{ij}} H_{(i, l_i)}\otimes_{H'_{ij}} H_{(j, l_j)}.
\end{align*}
The second equality uses (\ref{equ:gcd}). Here, we recall that $H_{(i, l_i)}=k[\epsilon_{(i, l_i)}]/(\epsilon_{(i, l_i)}^{p^{r_i}})$, and that $H'_{ij}=k[\epsilon_{ij}]/(\epsilon_{ij}^{p^{{\rm min}(r_i, r_j)}})$, which coincides with $H_{(i, l_i),(j, l_j)}$ in the notation of Section 3.

To obtain the required isomorphism, it suffices to have an isomorphism $A\simeq B$ of algebras and an isomorphism $V\simeq W$ of $A$-$A$-bimodules, where the $A$-$A$-bimodule structure on $W$ is induced by the given algebra isomorphism.

For the two isomorphisms, we just claim that there is an isomorphism of algebras
$$\Theta_i\colon kX(i)\stackrel{\sim}\longrightarrow \prod_{l_i=0}^{d_i-1} H_{(i, l_i)} $$
and an isomorphism of $kX(i)$-$kX(j)$-bimodules
\begin{align}\label{iso:bimod}
kX(i)\otimes_{kG_{ij}} kX(j)\stackrel{\sim}\longrightarrow   \bigoplus_{(l_i, l_j)\in \Sigma_{ij}} H_{(i, l_i)}\otimes_{H'_{ij}} H_{(j, l_j)},
\end{align}
for any $1\leq i, j\leq n$. We emphasize that the $kX(i)$-$kX(j)$-bimodule structure on $H_{(i, l_i)}\otimes_{H'_{ij}} H_{(j, l_j)}$ is induced by the isomorphisms $\Theta_i$ and $\Theta_j$.

Since $X(i)=\langle \eta_i\; |\; \eta_i^{c_i}=1\rangle$, we identify the group algebra $kX(i)$  with $k[\eta_i]/(\eta_i^{c_i}-1)$. Then the claim follows immediately from Proposition \ref{prop:main} below.

The proof in  the characteristic zero case is very similar. We omit the details.
\end{proof}

\section{An explicit decomposition of a bimodule}

In this section, we give an explicit decomposition of a tensor bimodule needed at the end of the proof of Theorem \ref{thm:main}; see Proposition \ref{prop:main}. The decomposition seems to be elementary, but somehow technical.

Let $k$ be a field having enough roots of unity with ${\rm char}k=p>0$. Let $a, b$ be two positive integers. Write $a=p^ra'$ and $b=p^sb'$ with $r, s\geq 0$ and ${\rm gcd}(p, a')=1={\rm gcd}(p, b')$. Then we have ${\rm gcd}(a, b)=p^{{\rm min}(r, s)} {\rm gcd}(a', b')$.

 Let $A=k[x]/(x^a-1)$, $B=k[y]/(y^b-1)$ and $C=k[z]/(z^{{\rm gcd}(a, b)}-1)$. There are injective algebra homomorphisms $\iota_1\colon C\rightarrow A$ and $\iota_2\colon C\rightarrow B$ such that $\iota_1(z)=x^{\frac{a}{{\rm gcd}(a, b)}}$ and $\iota_2(z)=y^{\frac{b}{{\rm gcd}(a, b)}}$. We are mainly concerned with the tensor $A$-$B$-bimodule $A\otimes_CB$.

We fix an $a'$-th primitive root $\zeta_1$ of unity and a $b'$-th primitive root $\zeta_2$ of unity such that $\zeta_1^{\frac{a'}{{\rm gcd}(a', b')}}=\zeta_2^{\frac{b'}{{\rm gcd}(a', b')}}$; the common value will be denoted by $\zeta$, which is a ${\rm gcd}(a', b')$-th primitive root of unity.

We have the following well-known isomorphisms of algebras
\[\pi_A \colon A\stackrel{\sim}{\longrightarrow} \prod_{i=0}^{a'-1} k[x]/{(x-\zeta_1^i)^{p^r}}=\prod_{i=0}^{a'-1} A_i,\]

\[\pi_B \colon B\stackrel{\sim}{\longrightarrow} \prod_{j=0}^{b'-1} k[y]/{(y-\zeta_2^j)^{p^s}}=\prod_{j=0}^{b'-1} B_j,\]
and
\[\pi_C \colon C\stackrel{\sim}{\longrightarrow} \prod_{l=0}^{{\rm gcd}(a', b')-1} k[z]/{(z-\zeta^l)^{p^{{\rm min}(r, s)}}}=\prod_{l=0}^{{\rm gcd}(a', b')-1} C_l,\]
each of which sends $\bar{f}$ to $(\bar{f}, \cdots, \bar{f})$ for any polynomial $f$.

For each $0\leq l< {\rm gcd}(a', b')$, we set
$$\Sigma(l)=\{m\;|\; 0\leq m< a', mp^r\equiv lp^{{\rm min}(r, s)} ({\rm mod}\;  {\rm gcd}(a', b')) \}.$$
Then we have a disjoint union
\begin{align}\label{equ:dis1}
\{0, 1, \cdots, a'-1\}=\bigsqcup_{l=0}^{{\rm gcd}(a', b')-1} \Sigma(l).
\end{align}
Here, we use the fact that $\bar{p}$ is invertible in $\mathbb{Z}_{{\rm gcd}(a', b')}$.
Similarly, we set
$$\Sigma'(l)=\{m\;|\; 0\leq m< b', mp^s\equiv lp^{{\rm min}(r, s)} ({\rm mod}\;  {\rm gcd}(a', b')) \},$$
and have the disjoint union
$$\{0, 1, \cdots, b'-1\}=\bigsqcup_{l=0}^{{\rm gcd}(a', b')-1} \Sigma'(l).$$

\begin{lem}
Assume that $m\in \Sigma(l)$. Then the following identity
$$(x^\frac{a}{{\rm gcd}(a, b)}-\zeta^l)^{p^{{\rm min}(r, s)}}=0$$
holds in the algebra $A_m$.
\end{lem}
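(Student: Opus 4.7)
The identity is essentially a Frobenius computation: in characteristic $p$, both the outer $p^{{\rm min}(r,s)}$-th power and the defining relation $(x-\zeta_1^m)^{p^r}=0$ of $A_m$ collapse to single monomial identities, and the claim reduces to the congruence that defines $\Sigma(l)$.

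More concretely, I would first apply the freshman's dream to rewrite
\[
(x^{a/{\rm gcd}(a,b)} - \zeta^l)^{p^{{\rm min}(r,s)}} = x^{N} - \zeta^{l p^{{\rm min}(r,s)}},
\]
where the new exponent simplifies to $N = a\, p^{{\rm min}(r,s)}/{\rm gcd}(a,b) = p^{r}\cdot a'/{\rm gcd}(a',b')$, using $a = p^r a'$ and ${\rm gcd}(a,b) = p^{{\rm min}(r,s)}{\rm gcd}(a',b')$. Then I would apply Frobenius a second time to the defining relation of $A_m = k[x]/(x-\zeta_1^m)^{p^r}$ to get $x^{p^r} = \zeta_1^{m p^r}$ in $A_m$, raise both sides to the power $a'/{\rm gcd}(a',b')$, and invoke $\zeta = \zeta_1^{a'/{\rm gcd}(a',b')}$ to deduce $x^{N} = \zeta^{m p^r}$ in $A_m$.

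With these two reductions in hand, the lemma amounts to the single congruence $\zeta^{m p^r} = \zeta^{l p^{{\rm min}(r,s)}}$, and since $\zeta$ has multiplicative order ${\rm gcd}(a', b')$, this is equivalent to $m p^r \equiv l p^{{\rm min}(r,s)} \pmod{{\rm gcd}(a', b')}$, which is exactly how $\Sigma(l)$ was defined. I do not anticipate any real obstacle; the only care required is bookkeeping with the four exponents $r$, $s$, ${\rm min}(r,s)$, and $a'/{\rm gcd}(a',b')$, and keeping straight which Frobenius is being applied at each step.
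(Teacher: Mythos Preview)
Your proposal is correct and follows essentially the same route as the paper: both arguments apply the Frobenius endomorphism twice, first to collapse the outer $p^{\min(r,s)}$-th power to $x^N-\zeta^{lp^{\min(r,s)}}$ with $N=p^r a'/\gcd(a',b')$, and then to exploit the relation $(x-\zeta_1^m)^{p^r}=0$ in $A_m$. The only cosmetic difference is that the paper phrases the second step as the divisibility $(x-\zeta_1^m)^{p^r}\mid \bigl(x^{a'/\gcd(a',b')}-(\zeta_1^m)^{a'/\gcd(a',b')}\bigr)^{p^r}$, whereas you compute $x^{p^r}=\zeta_1^{mp^r}$ directly in $A_m$ and then raise to the $a'/\gcd(a',b')$-th power; these are the same computation viewed from two sides.
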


\begin{proof}
We observe that $\frac{a}{{\rm gcd}(a, b)}p^{{\rm min}(r, s)}=\frac{a'}{{\rm gcd}(a', b')}p^r$ and $\zeta^{lp^{{\rm min}(r, s)}}=\zeta^{mp^r}$ by the fact that $m\in \Sigma(l)$. Then we have the first equality in the following identity
\begin{align*}
(x^\frac{a}{{\rm gcd}(a, b)}-\zeta^l)^{p^{{\rm min}(r, s)}}&=x^{\frac{a'}{{\rm gcd}(a', b')}p^r}-\zeta^{mp^r}\\
                                                           &=(x^{\frac{a'}{{\rm gcd}(a', b')}}-\zeta^m)^{p^r}\\
                                                           &=(x^{\frac{a'}{{\rm gcd}(a', b')}}-(\zeta_1^m)^{\frac{a'}{{\rm gcd}(a', b')}})^{p^r}.
\end{align*}
We observe that $(x-\zeta_1^m)^{p^r}$ is a factor of the above identity. Therefore, it is zero in $A_m$.
\end{proof}

The above lemma implies that the following algebra homomorphism
$$\phi_l\colon C_l\longrightarrow \prod_{i\in \Sigma(l)} A_i, \quad z\mapsto (x^\frac{a}{{\rm gcd}(a, b)}, \cdots, x^\frac{a}{{\rm gcd}(a, b)}).$$
is well defined for each $0\leq l< {\rm gcd}(a', b')$.  By the disjoint union (\ref{equ:dis1}), we have the following algebra homomorphism
$$(\phi_l)\colon \prod_{l=0}^{{\rm gcd}(a', b')-1} C_l\longrightarrow \prod_{l=0}^{{\rm gcd}(a', b')-1} (\prod_{i\in \Sigma(l)}A_i)=\prod_{i=0}^{a'-1} A_i.$$

By a similar argument, we have a well-defined algebra homomorphism
$$\psi_l\colon C_l\longrightarrow \prod_{j\in \Sigma'(l)} B_j, \quad z\mapsto (y^\frac{b}{{\rm gcd}(a, b)}, \cdots, y^\frac{b}{{\rm gcd}(a, b)}).$$
for each $0\leq l< {\rm gcd}(a', b')$. Summing them up, we have an algebra homomorphism
$$(\psi_l)\colon \prod_{l=0}^{{\rm gcd}(a', b')-1} C_l\longrightarrow \prod_{l=0}^{{\rm gcd}(a', b')-1} (\prod_{j\in \Sigma'(l)}B_j)=\prod_{j=0}^{b'-1} B_j.$$

\begin{lem}\label{lem:comm-diag1} Keep the notation as above. Then the following diagram commutes
\[\xymatrix@C=1.5cm{
	& A \ar[d]_{\pi_A} & C \ar[l]_{\iota_1} \ar[r]^{\iota_2} \ar[d]^{\pi_C} & B \ar[d]^{\pi_B}  \\
	&\prod_{i=0}^{a'-1} A_i  & \prod_{l=0}^{{\rm gcd}(a', b')-1} C_l \ar[l]_{(\phi_l)} \ar[r]^{(\psi_l)}   & \prod_{j=0}^{b'-1} B_j.}\]
\end{lem}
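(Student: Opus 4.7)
The plan is to verify commutativity separately for the left triangle $\pi_A\circ \iota_1 = (\phi_l)\circ \pi_C$ and for the right triangle $\pi_B\circ \iota_2 = (\psi_l)\circ \pi_C$; the two are entirely symmetric in the roles of $(A,a,r,a')$ and $(B,b,s,b')$, so I would only write out the left one.

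Since every map involved is a $k$-algebra homomorphism and $C$ is generated as a $k$-algebra by $z$ (so that $\pi_C$ is determined by $\pi_C(z)$), it suffices to check the equality $\pi_A(\iota_1(z)) = (\phi_l)(\pi_C(z))$. The right hand side is easy: by the formula for $\pi_C$, we have $\pi_C(z) = (z,z,\ldots,z)$, and applying $(\phi_l)$ coordinatewise gives, in each factor $A_i$ with $i\in \Sigma(l)$, the element $\phi_l(z) = \overline{x^{a/\gcd(a,b)}}$ in $A_i$. In view of the disjoint union $\{0,1,\ldots,a'-1\} = \bigsqcup_l \Sigma(l)$, the resulting element of $\prod_{i=0}^{a'-1} A_i$ is the tuple whose $i$-th coordinate is $\overline{x^{a/\gcd(a,b)}}$ in $A_i$ for every $i$.

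The left hand side is equally straightforward: $\iota_1(z) = x^{a/\gcd(a,b)}$ in $A$, and $\pi_A$ sends any $\bar f \in A$ to the constant tuple $(\bar f, \ldots, \bar f)$. Hence $\pi_A(\iota_1(z))$ is exactly the same tuple. This gives the desired equality, and the analogous argument with $y$, $b$, $s$, $b'$, $\Sigma'(l)$ in place of $x$, $a$, $r$, $a'$, $\Sigma(l)$ handles the right triangle.

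The only point where something nontrivial has been used is that $\phi_l$ (and $\psi_l$) are well-defined, which rests on the previous lemma (the identity $(x^{a/\gcd(a,b)}-\zeta^l)^{p^{\min(r,s)}}=0$ in $A_m$ whenever $m\in \Sigma(l)$); once that is in hand, the verification above is purely formal. So I do not expect any genuine obstacle: the lemma is a compatibility statement, and the real content was hidden in the definitions of $\phi_l$, $\psi_l$ and the preceding lemma.
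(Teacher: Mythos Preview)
Your proposal is correct and takes exactly the same approach as the paper: the paper's proof is the single sentence ``It suffices to observe that the commutativity holds clearly for the generator $z$ of $C$,'' and you have simply written out that observation in full detail.
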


\begin{proof}
It suffices to observe that the commutativity holds clearly for the generator $z$ of $C$.
\end{proof}

For each $0\leq i< a'$ and $0\leq j <b'$, we set $A'_i:=k[\epsilon_i]/(\epsilon_i^{p^r})$, $B'_j=k[\eta_j]/(\eta_j^{p^s})$ and $C'=k[\omega]/(\omega^{p^{{\rm min}(r, s)}})$.  There are injective algebra homomorphisms  $\phi'_{i}\colon C'\rightarrow A'_i$ and $\psi'_j\colon C'\rightarrow B'_j$ given by $\phi'_i(\omega)=\epsilon_i^{p^{r-{\rm min}(r, s)}}$ and $\psi'_j(\omega)=\eta_j^{p^{s-{\rm min}(r, s)}}$. Consequently, we have the tensor $A'_i$-$B'_j$-bimodule $A'_i\otimes_{C'}B'_j$.

We observe an isomorphism
$$\theta_i\colon A'_i\longrightarrow A_i$$
 of algebras sending $\epsilon_i$ to $x^{a'}-1$. We remark that there is another obvious isomorphism $A'_i\rightarrow A_i$ sending $\epsilon_i$ to $x-\zeta_1^{i}$. However, we really have to choose the isomorphism $\theta_i$ making Lemma \ref{lem:comm-diag2} work.

Similarly, we have isomorphisms $\rho_j\colon B'_j\rightarrow B_j$ and $\gamma_l\colon C'\rightarrow C_l$ given by $\rho_j(\eta_j)=y^{b'}-1$ and $\gamma_l(\omega)=z^{{\rm gcd}(a', b')}-1$.

\begin{lem}\label{lem:comm-diag2}
Keep the notation as above. We assume that $i\in \Sigma(l)$ and $j\in \Sigma'(l)$ for some $l$. Then the following diagram commutes
\[
\xymatrix@C=2.0cm{
& A'_i \ar[d]_{\theta_i} & C' \ar[l]_{\phi'_i} \ar[r]^{\psi'_j} \ar[d]^{\gamma_l} & B'_j \ar[d]^{\rho_j}  \\
& A_i  & C_l \ar[l]_{p_i\circ \phi_l} \ar[r]^{p'_j\circ \psi_l}   & B_j,
}
\]
where $p_i : \prod_{m\in \Sigma(l)} A_m\twoheadrightarrow A_i$ and $p'_j : \prod_{m\in \Sigma'(l)} B_m\twoheadrightarrow B_j$ are the canonical projections.
\end{lem}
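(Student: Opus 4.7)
The plan is to verify commutativity of each square by tracing the single generator of $C'$, namely $\omega$, around both paths and using the Frobenius identity $(u-v)^{p^k}=u^{p^k}-v^{p^k}$ valid in characteristic $p$, together with a short exponent computation. Since $C'$ is generated as a $k$-algebra by $\omega$, checking the equality of the two composite algebra homomorphisms on $\omega$ suffices.

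I will treat the left square first. Along the upper path $\theta_i\circ \phi'_i$, the generator $\omega$ is sent to $\epsilon_i^{p^{r-\min(r,s)}}$ and then to $(x^{a'}-1)^{p^{r-\min(r,s)}}$ in $A_i$. Along the lower path $(p_i\circ \phi_l)\circ \gamma_l$, the element $\omega$ is sent first to $z^{\gcd(a',b')}-1$ in $C_l$, and then, via $\phi_l$, to $x^{(a/\gcd(a,b))\cdot\gcd(a',b')}-1$ in $A_i$. The key arithmetic observation is
\[
\frac{a}{\gcd(a,b)}\cdot \gcd(a',b')\;=\;\frac{p^r a'}{p^{\min(r,s)}\gcd(a',b')}\cdot \gcd(a',b')\;=\;p^{r-\min(r,s)}\, a',
\]
so the lower path yields $x^{p^{r-\min(r,s)}a'}-1$. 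Since $\mathrm{char}\,k=p$, the Frobenius identity gives
\[
x^{p^{r-\min(r,s)}a'}-1\;=\;(x^{a'}-1)^{p^{r-\min(r,s)}},
\]
which matches the upper path. This equality already holds in $k[x]$, hence certainly in the quotient $A_i$.

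The right square is entirely symmetric: the same argument with $b,b',s$ and $\eta_j,y$ in place of $a,a',r$ and $\epsilon_i,x$ shows that both paths send $\omega$ to $(y^{b'}-1)^{p^{s-\min(r,s)}}=y^{p^{s-\min(r,s)}b'}-1$ in $B_j$. The hypothesis $j\in\Sigma'(l)$ is what makes $p'_j\circ\psi_l$ well-defined, exactly as $i\in\Sigma(l)$ is needed for $p_i\circ \phi_l$.

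I do not anticipate any serious obstacle here; the proof is a direct computation. The only point that requires care is the exponent identity relating $a/\gcd(a,b)$, $\gcd(a',b')$, $a'$ and the $p$-parts $p^r$, $p^{\min(r,s)}$, which is a straightforward consequence of the decomposition $\gcd(a,b)=p^{\min(r,s)}\gcd(a',b')$. Once this is set up, the characteristic-$p$ Frobenius identity closes the argument immediately.
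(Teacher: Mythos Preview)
Your proof is correct and follows exactly the approach of the paper: verify commutativity on the generator $\omega$ of $C'$ using the specific definitions of $\theta_i$, $\gamma_l$, and $\rho_j$. The paper's proof is terse, merely asserting that the check on $\omega$ is clear; your version supplies the explicit exponent identity $\tfrac{a}{\gcd(a,b)}\cdot\gcd(a',b')=p^{r-\min(r,s)}a'$ and the Frobenius computation that make this transparent.
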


\begin{proof}
We just observe that the commutativity holds clearly for the generator $\omega$ of $C'$. Here, we use the specific choice of $\theta_i$, $\gamma_l$ and $\rho_j$.
\end{proof}

The main result in this section claims a decomposition of the tensor $A$-$B$-bimodule $A\otimes_C B$. We set
 $$\Sigma=\{(i, j)\; |\; 0\leq i<a', 0\leq j<b', ip^{r}\equiv jp^s ({\rm mod}\; {\rm gcd}(a', b'))\}.$$

\begin{prop}\label{prop:main}
Keep the notation as above. Then the following statements hold.
\begin{enumerate}
\item There are isomorphisms of algebras
$$(\prod_{i=0}^{a'-1}\theta_i)^{-1}\circ \pi_A\colon A\longrightarrow \prod_{i=0}^{a'-1}A'_i \quad {\rm and} \quad (\prod_{j=0}^{b'-1}\rho_j)^{-1}\circ \pi_B\colon B\longrightarrow \prod_{j=0}^{b'-1} B'_j.$$
\item There is an isomorphism of $A$-$B$-bimodules
$$A\otimes_C B\stackrel{\sim}\longrightarrow \bigoplus_{(i, j)\in \Sigma} A'_i\otimes_{C'} B'_j$$
where the $A$-$B$-bimodule structure on $A'_i\otimes_{C'} B'_j$ is induced by the algebra isomorphisms in (1).
\end{enumerate}
\end{prop}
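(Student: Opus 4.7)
The plan is as follows. Part~(1) requires no real argument: the composition of the algebra isomorphism $\pi_A$ with the inverse of the algebra isomorphism $\prod_{i=0}^{a'-1}\theta_i$ produces an algebra isomorphism $A\to \prod_{i=0}^{a'-1} A'_i$, and the analogous remark handles $B$. The substance of the proposition therefore lies in~(2).

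For~(2), my approach is to decompose $A\otimes_C B$ along the primitive central idempotents of $C$ and then translate each resulting summand using Lemma~\ref{lem:comm-diag2}. Let $e_l\in C$ be the primitive central idempotent corresponding to the $l$-th factor under $\pi_C$. Since $1_C=\sum_l e_l$ is an orthogonal idempotent decomposition, I would first record the $A$-$B$-bimodule splitting
\[
A\otimes_C B \;=\; \bigoplus_{l=0}^{{\rm gcd}(a',b')-1} Ae_l \otimes_{C_l} e_l B.
\]

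Next I would use Lemma~\ref{lem:comm-diag1} to identify each summand explicitly. Chasing $e_l$ through the commutative diagram, $\iota_1(e_l)$ becomes under $\pi_A$ the idempotent supported exactly on those factors $A_i$ with $i\in\Sigma(l)$; indeed this is the content of the lower horizontal map $(\phi_l)$ together with the disjoint union $\{0,\ldots,a'-1\}=\bigsqcup_l\Sigma(l)$. Hence $Ae_l=\bigoplus_{i\in\Sigma(l)} A_i$, with the induced right $C_l$-action on $A_i$ given by $p_i\circ\phi_l$. The analogous computation yields $e_l B=\bigoplus_{j\in\Sigma'(l)} B_j$. Splitting the tensor product over $C_l$ along these idempotents then gives
\[
A\otimes_C B \;\cong\; \bigoplus_{l}\,\bigoplus_{\substack{i\in\Sigma(l)\\ j\in\Sigma'(l)}} A_i\otimes_{C_l} B_j \;=\; \bigoplus_{(i,j)\in\Sigma} A_i\otimes_{C_l} B_j,
\]
where in the last expression $l$ is uniquely determined by $(i,j)$.

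Finally, for each $(i,j)\in\Sigma$ with corresponding $l$, Lemma~\ref{lem:comm-diag2} supplies the algebra isomorphisms $\theta_i,\gamma_l,\rho_j$ together with the commutative square that shows the two $C$-actions agree after transport. Consequently $\theta_i^{-1}\otimes\rho_j^{-1}$ descends to a well-defined isomorphism $A_i\otimes_{C_l} B_j\stackrel{\sim}\longrightarrow A'_i\otimes_{C'} B'_j$, and by the very definition in~(1) of the $A$-$B$-bimodule structure on the right-hand side, this map is a bimodule isomorphism. Summing over $(i,j)\in\Sigma$ completes the argument. The main obstacle is bookkeeping: one must verify that the idempotent chase genuinely singles out $\Sigma(l)$ in $Ae_l$ (and $\Sigma'(l)$ in $e_lB$) and that the bimodule structure transported via $\theta_i,\rho_j$ coincides with the one coming from~(1). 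Both are precisely what the two commutative diagrams of Lemmas~\ref{lem:comm-diag1} and~\ref{lem:comm-diag2} are designed to deliver, so with those lemmas in hand the argument becomes essentially formal.
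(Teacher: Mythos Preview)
Your proposal is correct and follows essentially the same route as the paper: decompose $A\otimes_C B$ along the idempotents of $C$ using Lemma~\ref{lem:comm-diag1}, distribute the tensor product over the resulting finite products to obtain $\bigoplus_{(i,j)\in\Sigma} A_i\otimes_{C_l} B_j$, and then transport each summand to $A'_i\otimes_{C'}B'_j$ via Lemma~\ref{lem:comm-diag2}. Your version is simply more explicit about the idempotent bookkeeping than the paper's terse three-line chain of isomorphisms.
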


\begin{proof}
It suffices to prove (2). The vertical morphisms in the commutative diagrams of the above two lemmas are all isomorphisms. By Lemma \ref{lem:comm-diag1}, we have the first isomorphism in the following identity
\begin{align*}
A\otimes_C B & \simeq \bigoplus_{l=0}^{{\rm gcd}(a', b')-1} (\prod_{i\in \Sigma(l)} A_i)\otimes_{C_l}(\prod_{j\in \Sigma'(l)} B_j)\\
&\simeq \bigoplus_{(i, j)\in \Sigma} A_i\otimes_{C_l} B_j\\
&\simeq \bigoplus_{(i, j)\in \Sigma} A'_i\otimes_{C'} B'_j.
\end{align*}
The last isomorphism follows from Lemma \ref{lem:comm-diag2}.
\end{proof}

Choose $A, B$ and $C$ to be $kX(i)$, $kX(j)$ and $kG_{ij}$, respectively. Then we get the isomorphism (\ref{iso:bimod}) in the proof of Theorem \ref{thm:main}.

\section{The construction in Subsection \ref{sub:4.2} and examples}\label{sec:6}

In this section, we  study the construction from $(C, D, \Omega)$ to  $(C', D', \Omega)$ in Subsection \ref{sub:4.2}. We prove that the valued graph $\Gamma$ of $C$ is a disjoint union of Dynkin graphs (\emph{resp.} Euclidean graphs, graphs of indefinite type) if and only if so is the valued graph $\Gamma'$ of $C'$; see Proposition \ref{prop:type}.

We observe that our construction is related to the well-known correspondence in \cite[Section 14.1]{Lus} between Cartan matrices and graphs with automorphisms. Consequently, in the hereditary case, the isomorphism in Theorem \ref{thm:main} yields an algebraic enrichment of the mentioned correspondence; see Proposition \ref{prop:compa}.

\subsection{The types of Cartan matrices}

Let $C=(c_{ij})\in M_n(\mathbb{Z})$ be a Cartan matrix. Denote by $\Gamma$ the associated \emph{valued graph}. Recall that the vertices of $\Gamma$ is given by $\{1, 2, \cdots, n\}$ and that there is a unique edge between $i$ and $j$ if and only if $c_{ij}<0$; moreover, we put a valuation $(|c_{ij}|, |c_{ji}|)$ on the edge. We emphasize that the valuation is not assigned to the edge, but really to the ordered pair $(i, j)$ of vertices. We say that $C$ is \emph{connected} provided that $\Gamma$ is a connected graph. In what follows, we will identify the Cartan matrix $C$ with its valued graph $\Gamma$. In particular, symmetric Cartan matrices correspond to usual graphs, which do not have loops, but possibly have parallel edges.

Let $D={\rm diag}(c_1, c_2, \cdots, c_n)$ be a symmetrizer of $C$.  Denote by $\mathbb{Z}^n$ the root lattice of $C$, which has a canonical basis $\{e_1, e_2, \cdots, e_n\}$. Recall that the quadratic form $q_C\colon \mathbb{Z}^n\rightarrow \mathbb{Z}$ is given by $q_C({\bf x})=\sum_{i=1}^n c_ix_i^2+\sum_{i<j} c_i c_{ij}x_ix_j$ for ${\bf x}=(x_1, x_2, \cdots, x_n)$.

We fix a prime number $p>0$. Recall from Subsection \ref{sub:4.2} that $(C', D', \Omega')$ is constructed from $(C, D, \Omega)$. We denote by $\Gamma'$ its valued graph. The vertices of $\Gamma'$ is indexed by the following set
$$M=\bigsqcup_{1\leq i\leq n} \{(i, l_i)\; |\; 0\leq l_i<d_i\}.$$
Here, we recall that $c_i=p^{r_i}d_i$. Its root lattice is denoted by $\mathbb{Z}^M$ with a canonical basis $\{e_{(i, l_i)} \mid (i, l_i)\in M \}$ and its quadratic form $q_{C'}$.

The following result is inspired by \cite[Subsection 13.2.9]{Lus}.

\begin{lem}\label{lem:quad}
The natural injection $\theta\colon \mathbb{Z}^n\rightarrow \mathbb{Z}^M$ sending each $e_i$ to $\sum_{0\leq l_i<d_i} e_{(i, l_i)}$ is compatible with the quadratic form, that is, $q_C({\bf x})=q_{C'}(\theta({\bf x}))$.
\end{lem}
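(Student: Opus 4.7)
The plan is a direct term-by-term comparison of coefficients, carried out in the symmetric-bilinear reformulation of the quadratic forms. Since $DC$ is symmetric one has $c_ic_{ij}=c_jc_{ji}$ and $q_C(\mathbf{x})=\tfrac{1}{2}\sum_{i,j}c_ic_{ij}x_ix_j$; the analogous identity holds for $q_{C'}$. Substituting $\mathbf{y}=\theta(\mathbf{x})$, so that $y_{(i,l_i)}=x_i$ for every $l_i$, I would split the resulting double sum into the diagonal part ($(i,l_i)=(j,l_j)$), the part with $i=j$ but $l_i\neq l_i'$, and the part with $i\neq j$, then handle each separately.

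The diagonal piece collapses to $\sum_i d_ip^{r_i}x_i^2=\sum_i c_ix_i^2$, which is exactly the diagonal of $q_C$. The second piece vanishes because $c'_{(i,l_i),(i,l_i')}=0$ by the construction of $C'$. Everything therefore reduces to analysing, for each fixed $i\neq j$ with $c_{ij}\neq 0$, the subsum
\[
\tfrac{1}{2}\,p^{r_i}\!\!\sum_{(l_i,l_j)\in\Sigma_{ij}}\!\! c'_{(i,l_i),(j,l_j)}.
\]
Each summand equals $-\gcd(|c_{ij}|,|c_{ji}|)\,p^{r_j-\min(r_i,r_j)}$, so only $|\Sigma_{ij}|$ needs to be counted. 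Since $\gcd(p,d_id_j)=1$, the factor $p^{r_i}$ is invertible modulo $\gcd(d_i,d_j)$, so for each $l_j$ there are exactly $d_i/\gcd(d_i,d_j)$ compatible $l_i$; hence $|\Sigma_{ij}|=\mathrm{lcm}(d_i,d_j)$. Combining the powers of $p$ via $p^{r_i+r_j-\min(r_i,r_j)}=p^{\max(r_i,r_j)}$ and using $\mathrm{lcm}(c_i,c_j)=p^{\max(r_i,r_j)}\mathrm{lcm}(d_i,d_j)$, the coefficient of $x_ix_j$ coming from the pair $(i,j)$ becomes $-\mathrm{lcm}(c_i,c_j)\gcd(|c_{ij}|,|c_{ji}|)$.

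The whole lemma therefore reduces to the number-theoretic identity
\[
\mathrm{lcm}(c_i,c_j)\gcd(|c_{ij}|,|c_{ji}|)=c_i|c_{ij}|,
\]
which I expect to be the only non-cosmetic step. It is equivalent to $\gcd(c_i,c_j)\,|c_{ij}|=c_j\gcd(|c_{ij}|,|c_{ji}|)$ and follows from symmetrizability: writing $\gcd(c_i,c_j)=h$, $c_i=hu$, $c_j=hv$ and $\gcd(|c_{ij}|,|c_{ji}|)=g$, $|c_{ij}|=g\alpha$, $|c_{ji}|=g\beta$ with $\gcd(u,v)=\gcd(\alpha,\beta)=1$, the relation $c_i|c_{ij}|=c_j|c_{ji}|$ forces $u\alpha=v\beta$ and hence $\alpha=v$, $\beta=u$, which yields the identity at once. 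The main obstacle is purely the bookkeeping of $p$-adic exponents in the off-diagonal calculation; the characteristic zero case is handled in parallel by setting $r_i=0$, $d_i=c_i$, whereupon the same chain of identities collapses to the statement above.
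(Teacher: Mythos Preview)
Your proposal is correct and follows essentially the same route as the paper: pass to the associated symmetric bilinear forms, verify the identity on basis vectors, dispose of the diagonal and the $i=j,\ l_i\neq l_i'$ pieces, count $|\Sigma_{ij}|=d_id_j/\gcd(d_i,d_j)$, and finish with a coprimality argument extracted from $c_ic_{ij}=c_jc_{ji}$. The only cosmetic difference is in the endgame: the paper keeps the $p$-adic factorisation explicit and, assuming $r_i\geq r_j$, deduces $\dfrac{-c_{ij}}{\gcd(c_{ij},c_{ji})}=\dfrac{d_j}{\gcd(d_i,d_j)}$ directly, whereas you first absorb the powers of $p$ into $\mathrm{lcm}(c_i,c_j)$ and then prove the clean, $p$-free identity $\mathrm{lcm}(c_i,c_j)\gcd(|c_{ij}|,|c_{ji}|)=c_i|c_{ij}|$ via the $u\alpha=v\beta$ trick. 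Your packaging is arguably tidier, since the resulting identity holds for any symmetrizable Cartan matrix without reference to the prime $p$, but the underlying argument is the same.
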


\begin{proof}
Denote by $(-, -)_C$ and $(-, -)_{C'}$ the symmetric bilinear form on $\mathbb{Z}^n$ and $\mathbb{Z}^M$ corresponding to $q_C$ and $q_{C'}$, respectively. It suffices to claim that $(e_i, e_j)_C=(\theta(e_i), \theta(e_j))_{C'}$.

Recall that $(e_i, e_i)_C=2c_i=2p^{r_i}d_i$ and $(e_i, e_j)_C=c_ic_{ij}$ for $i\neq j$. Similarly, $(e_{(i, l_i)}, e_{(i, l_i)})_{C'}=2p^{r_i}$ and $(e_{(i, l_i)}, e_{(j, l_j)})_{C'}=p^{r_i}c'_{(i, l_i), (j, l_j)}$. In particular, $(e_{(i, l_i)}, e_{(i, l'_i)})_{C'}=0$ for $l_i\neq l'_i$. Then the required identity is easily verified for the case $i=j$.

We assume that $i\neq j$. Then we have
\begin{align*}
(\theta(e_i), \theta(e_j))_{C'}&=\sum_{0\leq l_i< d_i, 0\leq l_j< d_j} p^{r_i}c'_{(i, l_i), (j, l_j)}\\
                               &=-\sum_{(l_i, l_i)\in \Sigma_{ij}} p^{r_i+r_j-{\rm min}(r_i, r_j)} {\rm gcd}(c_{ij}, c_{ji})\\
                               &=-  p^{r_i+r_j-{\rm min}(r_i, r_j)} \frac{d_id_j}{{\rm gcd}(d_i, d_j)} {\rm gcd}(c_{ij}, c_{ji}).
\end{align*}
where the last equality uses the fact that the cardinality of $\Sigma_{ij}$ is $\frac{d_id_j}{{\rm gcd}(d_i, d_j)}$. We assume without loss of generality that $r_i\geq r_j$. Since $c_ic_{ij}=c_jc_{ji}$, we have
$$p^{r_i-r_j} \frac{d_i}{{\rm gcd}(d_i, d_j)} \frac{c_{ij}}{{\rm gcd}(c_{ij}, c_{ji})}=\frac{d_j}{{\rm gcd}(d_i, d_j)} \frac{c_{ji}}{{\rm gcd}(c_{ij}, c_{ji})}.$$
Since $d_i$ and $d_j$ are coprime to $p$, we infer that
$$ \frac{-c_{ij}}{{\rm gcd}(c_{ij}, c_{ji})}=\frac{d_j}{{\rm gcd}(d_i, d_j)}.$$
Then the required equality in the claim follows immediately.
\end{proof}

Recall that the symmetrizer $D$ is \emph{minimal},  provided that ${\rm gcd}(c_1, c_2, \cdots, c_n)=1$. For a connected Cartan matrix, the minimal symmetrizer is unique and any symmetrizer is a multiple of the minimal one.

\begin{lem}\label{lem:conn}
Let $C=(c_{ij})\in M_n(\mathbb{Z})$ be  a connected Cartan matrix with its minimal symmetrizer $D$. Then the constructed Cartan matrix $C'$ is also connected.
\end{lem}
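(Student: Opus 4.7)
The plan is to split the argument into two observations. First, contracting each layer $V_i := \{(i,l_i) \mid 0 \le l_i < d_i\}$ of $C'$ to a point recovers the valued graph of $C$, since $(0,0)\in\Sigma_{ij}$ shows that any two adjacent layers $V_i$ and $V_j$ (with $c_{ij}<0$) are linked in $C'$ by at least one edge. Second, each individual layer $V_i$ lies in a single connected component of $C'$. Combined with connectedness of $C$ these two observations force $C'$ to be connected.

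For the second observation, the main input is that minimality of $D$ forces $\gcd(d_1,\ldots,d_n)=1$: writing $c_i=p^{r_i}d_i$ with $\gcd(p,d_i)=1$, one has $\gcd(c_1,\ldots,c_n)=p^{\min_i r_i}\gcd(d_1,\ldots,d_n)$, so the hypothesis $\gcd(c_i)=1$ yields $\gcd(d_i)=1$ (and some $r_i=0$). Fix $i$ and let $K_i\le\mathbb{Z}/d_i\mathbb{Z}$ denote the set of $h$ with $(i,0)$ and $(i,h)$ in the same component of $C'$; using that the edge condition of $C'$ is preserved under uniform translation by a compatible shift vector, any path $(i,0)\to(i,h_2)$ in $C'$ can be translated by $h_1\in K_i$ to a path $(i,h_1)\to(i,h_1+h_2)$, which makes $K_i$ a subgroup. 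I then plan to prove by induction on $m$ that for any walk $i=i_0,i_1,\ldots,i_m$ in $C$,
\[ K_i \supseteq \gcd(d_i,d_{i_1},\ldots,d_{i_m})\,\mathbb{Z}/d_i\mathbb{Z}. \]

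The base case $m=1$ analyzes the out-and-back loop $(i,i_1,i)$: the intermediate $l_{i_1}$ is constrained by $l_i=0$ only modulo $\gcd(d_i,d_{i_1})$, so the $d_{i_1}/\gcd(d_i,d_{i_1})$ free lifts of $l_{i_1}\in\mathbb{Z}/d_{i_1}\mathbb{Z}$ sweep out the full coset $\gcd(d_i,d_{i_1})\mathbb{Z}/d_i\mathbb{Z}$ of return endpoints. For the inductive step, I apply the hypothesis at $V_{i_1}$ to obtain shifts there filling $\gcd(d_{i_1},\ldots,d_{i_m})\mathbb{Z}/d_{i_1}\mathbb{Z}$, then transport them back across the edge $(i,i_1)$; since $p$ is invertible modulo each $d_j$, the induced image at $V_i$, taken modulo $\gcd(d_i,d_{i_1})$, equals $\gcd(d_i,d_{i_1},\ldots,d_{i_m})\mathbb{Z}/\gcd(d_i,d_{i_1})\mathbb{Z}$, and combined with the freedom of the final lift to $\mathbb{Z}/d_i\mathbb{Z}$ this generates precisely $\gcd(d_i,d_{i_1},\ldots,d_{i_m})\mathbb{Z}/d_i\mathbb{Z}$. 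Choosing a walk visiting every vertex of $C$ then yields $K_i\supseteq\gcd(d_1,\ldots,d_n)\mathbb{Z}/d_i\mathbb{Z}=\mathbb{Z}/d_i\mathbb{Z}$, establishing the second observation. The main obstacle I anticipate is precisely this inductive step: shifts at $V_{i_1}$ are seen at $V_i$ only modulo $\gcd(d_i,d_{i_1})$, so I must verify carefully that the interplay between this reduction and the final lift back to $\mathbb{Z}/d_i\mathbb{Z}$ produces the claimed subgroup and not a smaller one.
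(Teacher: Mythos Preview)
Your argument is correct and takes a genuinely different route from the paper's. The paper proceeds directly: given any two vertices $(i,l)$ and $(j,l')$ of $\Gamma'$, it chooses a walk $i=x(1),\ldots,x(m+1)=j$ in $\Gamma$ visiting every vertex, sets $g_a=\gcd(d_{x(a)},d_{x(a+1)})$, uses minimality of $D$ to obtain a Bezout relation $\sum_a s_ag_a=1$, and then writes down an explicit linear-interpolation formula for the intermediate labels $l_a$ of a lifted walk in $\Gamma'$. Your approach is instead structural: reduce to showing each layer $V_i$ lies in a single component, then run an induction on walk length to show the return set $K_i$ eventually fills $\mathbb Z/d_i\mathbb Z$. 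The paper's proof is shorter and fully constructive; yours isolates the mechanism (the subgroup of achievable returns grows as more $d_j$'s are seen) and would adapt more readily if one wanted to describe the components when $D$ is not minimal.

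Two small remarks. First, your translation argument for ``$K_i$ is a subgroup'' is vague as written --- a path from $(i,0)$ to $(i,h_1)$ does not obviously furnish a single compatible shift vector for an arbitrary second path --- but it can be made precise via the global graph automorphism $(j,l_j)\mapsto(j,\,l_j+p^{-r_j}\bmod d_j)$ of $\Gamma'$, whose $(h_1p^{r_i})$-th power shifts layer $i$ by exactly $h_1$. In any case your induction never actually uses the subgroup property: taking $l_{i_1}=0$ and invoking the hypothesis at $i_1$ already produces all of $\gcd(d_i,d_{i_1},\ldots,d_{i_m})\,\mathbb Z/d_i\mathbb Z$ as return points, because the final edge constrains $l_i'$ only modulo $\gcd(d_i,d_{i_1})$. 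Second, for the same reason, in your base case the freedom in the return label $l_i'$ alone (even with $l_{i_1}=0$ fixed) already sweeps out $\gcd(d_i,d_{i_1})\,\mathbb Z/d_i\mathbb Z$; varying $l_{i_1}$ is not what does the work.
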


\begin{proof}
For two distinct vertices $(i, l)$ and $(j, l')$ in $\Gamma'$, we will construct a path connecting them. By the connectedness of $\Gamma$, there is a path in $\Gamma$
$$i=x(1) \text{---} x(2) \text{---} \cdots \text{---} x(m)\text{---} x(m+1)=j$$
such that for each $1\leq s\leq n$, there exists at least one $a$ satisfying $x(a)=s$.  For each $1\leq a\leq m$, we set $g_a={\rm gcd}(d_{x(a)}, d_{x(a+1)})$. By the minimality of $D$, we have ${\rm gcd}(g_1, g_2, \cdots, g_{m})=1$. There exist $s_a\in \mathbb{Z}$ such that $\sum_{a=1}^m s_ag_a=1$.

Set $l_1=l$. For $2\leq a\leq m+1$, we define $l_a$  to be the  unique integer satisfying $0\leq l_a<d_{x(a)}$ and
$$l_{a}p^{r_{x(a)}}\equiv lp^{r_i}+\sum_{b=1}^{a-1} s_bg_b(l'p^{r_j}-lp^{r_i}) ({\rm mod}\; d_{x(a)}).$$
We observe $l_{m+1}=l'$. Moreover, it is clear that $l_{a}p^{r_{x(a)}}\equiv l_{a+1}p^{r_{x(a+1)}} ({\rm mod}\;  g_a)$ for each $1\leq a\leq m$. Therefore, there is an edge from $(x(a), l_a)$ to $(x(a+1), l_{a+1})$ in $\Gamma'$. This yields the required construction.
\end{proof}

 The following result implies that in the construction of Subsection \ref{sub:4.2},  we may assume that $D$ is minimal.

\begin{lem}\label{lem:mul}
Let $c=p^rd$ be a positive integer with $r\geq 0$ and ${\rm gcd}(p,d)=1$. Denote by  $(C'', D'', \Omega'')$ the Cartan triple constructed from $(C, cD, \Omega)$ as in Subsection $\ref{sub:4.2}$, and by $\Gamma''$  its valued graph. Then $\Gamma''$ is isomorphic to the disjoint union of $d$ copies of $\Gamma'$.
\end{lem}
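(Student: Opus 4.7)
The plan is to exhibit $\Gamma''$ as a disjoint union of $d$ copies of $\Gamma'$ by first partitioning $\mathrm{Vert}(\Gamma'')$ into $d$ classes via an edge-invariant label, and then identifying each class with $\Gamma'$. Unpacking the parameters: since $c = p^r d$ with $\gcd(p, d) = 1$, the entries of $cD$ satisfy $cc_i = p^{r+r_i}(dd_i)$, so the construction of Subsection~\ref{sub:4.2} gives $r''_i = r + r_i$ and $d''_i = dd_i$. In particular $g''_{ij} := \gcd(d''_i, d''_j) = dg_{ij}$ with $g_{ij} := \gcd(d_i, d_j)$, and the valuation of $\Gamma''$ agrees with that of $\Gamma'$:
\[
c''_{(i,l),(j,l')} = -\gcd(c_{ij}, c_{ji})\, p^{r_j - \min(r_i, r_j)} = c'_{(i,l),(j,l')}.
\]
After cancelling the unit $p^r$ modulo $dg_{ij}$, the edge condition of $\Gamma''$ reads $l''_i p^{r_i} \equiv l''_j p^{r_j} \pmod{dg_{ij}}$.

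Next I would define $\tau\colon \mathrm{Vert}(\Gamma'') \to \mathbb{Z}/d$ by $\tau(i, l''_i) = l''_i p^{r_i} \bmod d$. Reducing the edge condition modulo $d$ shows that $\tau$ is constant along each edge, and an easy count gives $|\tau^{-1}(v)| = \sum_i d_i = m$ for every $v$. Thus $\Gamma''$ decomposes as a disjoint union $\bigsqcup_{v \in \mathbb{Z}/d} \Gamma''_v$, where $\Gamma''_v := \tau^{-1}(v)$ is a valued subgraph on $m$ vertices with the same valuations as $\Gamma'$. For $v = 0$ the map $(i, l_i) \mapsto (i, dl_i)$ is readily checked to be a valued-graph isomorphism $\Gamma' \cong \Gamma''_0$: dividing both the modulus $dg_{ij}$ and the congruence by $d$ recovers the edge condition of $\Gamma'$.

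To treat $v \neq 0$ I would introduce a twisted shift. Set $N = \mathrm{lcm}_i(dd_i)$, which is coprime to $p$, and pick $\kappa \in \mathbb{Z}$ with $p\kappa \equiv 1 \pmod N$. Then $\sigma_\alpha(i, l''_i) := (i, l''_i + \alpha \kappa^{r_i} \bmod dd_i)$ should be a valued-graph automorphism of $\Gamma''$: since $(\kappa p)^{r_i} \equiv 1 \pmod{dg_{ij}}$ for every $i$ (as $dg_{ij} \mid dd_i \mid N$), one computes
\[
(l''_i + \alpha \kappa^{r_i}) p^{r_i} - (l''_j + \alpha \kappa^{r_j}) p^{r_j} \equiv l''_i p^{r_i} - l''_j p^{r_j} \pmod{dg_{ij}},
\]
so $\sigma_\alpha$ preserves edges; valuations are preserved trivially, and $\tau \circ \sigma_\alpha = \tau + \alpha$, so $\sigma_\alpha$ restricts to a valued-graph isomorphism $\Gamma''_v \cong \Gamma''_{v + \alpha}$. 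Combined with $\Gamma' \cong \Gamma''_0$ this yields $\Gamma' \cong \Gamma''_v$ for every $v$, whence $\Gamma'' \cong \bigsqcup_{v \in \mathbb{Z}/d} \Gamma'$. The main obstacle I expect is that the naive projection $l''_i \mapsto l''_i \bmod d_i$ does \emph{not} induce an isomorphism $\Gamma''_v \to \Gamma'$ when $\gcd(d, d_i) > 1$ and $v \neq 0$, and only the twisted shift via the simultaneous inverse $\kappa$ of $p$ modulo all the $dd_i$ produces the correct identification; the characteristic zero case is subsumed by the same argument with $r_i = 0$ and $\kappa = 1$.
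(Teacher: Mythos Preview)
Your argument is correct and uses the same decomposition as the paper: both partition $\mathrm{Vert}(\Gamma'')$ by the residue $m_i p^{r_i} \bmod d$ (your $\tau$, the paper's label $a$) and observe that this label is edge-invariant, so that $\Gamma''$ splits as $\bigsqcup_a \Gamma''_a$. The only difference lies in how each piece $\Gamma''_a$ is identified with $\Gamma'$. The paper writes a single direct map $\Gamma''_a \to \Gamma'$, sending $(i, m_i)$ to $(i, l_i)$ where $l_i$ is determined by $\tfrac{m_i p^{r_i} - a}{d} \equiv l_i p^{r_i} \pmod{d_i}$, and then checks that $(m_i, m_j) \in \Sigma'_{ij}$ iff $(l_i, l_j) \in \Sigma_{ij}$. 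Your route factors this as $\Gamma''_a \xrightarrow{\sigma_{-a}} \Gamma''_0 \to \Gamma'$; it is in fact the \emph{same} map, since multiplying your congruence $d l_i \equiv m_i - a\kappa^{r_i} \pmod{d d_i}$ through by $p^{r_i}$ and using $(\kappa p)^{r_i} \equiv 1$ recovers the paper's formula. What your packaging buys is that the $\mathbb{Z}/d$-symmetry of $\Gamma''$ is made explicit via the automorphisms $\sigma_\alpha$; what the paper's direct formula buys is that one never has to introduce the simultaneous inverse $\kappa$ of $p$ at all.
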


\begin{proof}
The  vertices of $\Gamma''$ are indexed by $M'=\bigsqcup_{1\leq i\leq n} \{(i, m_i)\; |\; 0\leq m_i< dd_i\}.$
The following set
$$\{(m_i, m_j)\; |\; 0\leq m_i< dd_i, 0\leq m_j< dd_j, m_ip^{r_i+r}\equiv m_jp^{r_j+r} ({\rm mod}\;  d{\rm gcd}(d_i, d_j))\}$$
will be denoted by $\Sigma'_{ij}$. Then the off-diagonal entries of $C''$ are given as follows:
 \[c''_{(i, m_i), (j, m_j)}=\begin{cases}
-{\rm gcd}(c_{ij},c_{ji}) p^{(r_j+r)-{\rm min}(r_i+r, r_j+r)}, & \text{ if } (m_i, m_j)\in \Sigma'_{ij}; \\
0, & \text{otherwise.}
\end{cases}\]
It follows that a off-diagonal entry $c''_{(i, m_i), (j, m_j)}\neq 0$ implies that $i\neq j$ and $m_ip^{r_i}\equiv m_j p^{r_j} ({\rm mod}\ d)$, in which case it is equal to $c'_{(i, l_i), (j, l_j)}$ for $(l_i, l_j)\in \Sigma_{ij}$.

For each $0\leq a <d$, we denote by $\Gamma''_a$ the full subgraph of $\Gamma''$ formed by these vertices $\{(i, m_i)\in M'\;| \; m_ip^{r_i}\equiv a ({\rm mod}\ d)\}$. It follows that $\Gamma''$ is a disjoint union of these subgraphs $\Gamma''_a$'s. We observe that each $\Gamma''_a$ is isomorphic to $\Gamma'$ by sending $(i, m_i)$ to $(i, l_i)$, where $l_i$ is uniquely determined by the constraint
 $$\frac{m_ip^{r_i}-a}{d}\equiv l_ip^{r_i} \; ({\rm mod}\; d_i).$$
 Here for the isomorphism, we use the fact that $(m_i, m_j)$ lies in $\Sigma'_{ij}$ if and only if  $(l_i, l_j)$ lies in $\Sigma_{ij}$.
\end{proof}

The following fact follows immediately from the above two lemmas and the fact that  any symmetrizer of a connected Cartan matrix is a multiple of the minimal symmetrizer.

\begin{cor}\label{cor:mul}
Keep the notation as above. Assume that $C$ is  connected. Then the connected components of the valued graph $\Gamma'$ are isomorphic to each other. \hfill $\square$
\end{cor}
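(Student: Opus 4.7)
The plan is to combine Lemma~\ref{lem:conn} and Lemma~\ref{lem:mul} in a straightforward way. Since $C$ is connected, it admits a unique minimal symmetrizer $D_{\min}$, and any symmetrizer $D$ of $C$ is an integer multiple of $D_{\min}$, say $D = c D_{\min}$ for some positive integer $c$. The idea is to first handle the minimal case, then use the ``multiplication by $c$'' lemma to deduce the general case.

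First, I would construct the Cartan triple $(C'_{\min}, D'_{\min}, \Omega'_{\min})$ from $(C, D_{\min}, \Omega)$ as in Subsection~\ref{sub:4.2}, and denote its valued graph by $\Gamma'_{\min}$. By Lemma~\ref{lem:conn}, $\Gamma'_{\min}$ is connected. Next, write $c = p^r d$ with $r \geq 0$ and $\gcd(p, d) = 1$. Since $D = c D_{\min}$, Lemma~\ref{lem:mul} applied to the triple $(C, D_{\min}, \Omega)$ shows that the valued graph $\Gamma'$ constructed from $(C, D, \Omega) = (C, c D_{\min}, \Omega)$ is isomorphic to the disjoint union of $d$ copies of $\Gamma'_{\min}$.

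Since $\Gamma'_{\min}$ is connected, each of these $d$ copies is itself a connected component of $\Gamma'$, and they are visibly isomorphic to one another (indeed all isomorphic to $\Gamma'_{\min}$). Hence the connected components of $\Gamma'$ are pairwise isomorphic, as required.

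There is no real obstacle here; the content has already been absorbed into the two preceding lemmas. The only small point to be careful about is the decomposition $D = c D_{\min}$, which relies on the connectedness of $C$ for the uniqueness of the minimal symmetrizer, and on the fact that Lemma~\ref{lem:mul} is stated for an arbitrary positive integer $c = p^r d$ (covering both the case $r = 0$ and $r > 0$, as well as the characteristic zero case if one sets $p$ aside accordingly).
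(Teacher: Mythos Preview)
Your proposal is correct and matches the paper's approach exactly: the paper simply remarks that the corollary follows immediately from Lemmas~\ref{lem:conn} and~\ref{lem:mul} together with the fact that any symmetrizer of a connected Cartan matrix is a multiple of the minimal one, which is precisely the argument you have written out.
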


Recall that a connected Cartan matrix $C$ is a \emph{Dynkin graph} (\emph{resp.} an \emph{Euclidean graph},  a \emph{graph of indefinite type}), provided that its quadratic form $q_C$ is positive definite (\emph{resp.} positive semi-definite, indefinite).

\begin{prop}\label{prop:type}
Keep the notation as above. Assume further that $C$ and thus $\Gamma$ are connected. Then $\Gamma$ is a Dynkin graph (\emph{resp.} an Euclidean graph,  a graph of indefinite type) if and only if $\Gamma'$ is a disjoint union of Dynkin graphs (\emph{resp.}  Euclidean graphs,   graphs of indefinite type), whose connected components are of the same type.
\end{prop}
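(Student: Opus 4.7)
The plan is to reduce the proposition to a statement about signatures of $q_C$ and $q_{C'}$, with Lemma~\ref{lem:quad} providing the basic link. By Corollary~\ref{cor:mul}, the connected components of $\Gamma'$ are pairwise isomorphic, so the conclusion ``components of the same type'' is automatic, and it suffices to establish that $\Gamma$ is of any one of the three types (Dynkin, Euclidean, indefinite) exactly when $\Gamma'$ is. Lemma~\ref{lem:mul} allows one to assume, if convenient, that $D$ is the minimal symmetrizer, in which case Lemma~\ref{lem:conn} makes $\Gamma'$ connected.

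Next I would decompose $\mathbb{Q}^M$ orthogonally with respect to the symmetric bilinear form associated to $q_{C'}$. The plan is to define the averaging map $\pi\colon \mathbb{Q}^M\to \mathbb{Q}^n$ by $e_{(i,l_i)}\mapsto d_i^{-1}e_i$; since this is a left inverse of $\theta$, one gets a vector-space splitting $\mathbb{Q}^M = \theta(\mathbb{Q}^n)\oplus \ker(\pi)$. The key computation is that these two summands are orthogonal: this uses only the uniformity $|\{l_k : (l_k,l_j)\in \Sigma_{kj}\}| = d_k/\gcd(d_k,d_j)$ for every $l_j$ (a consequence of $p$ being coprime to $d_k$), which makes the pairing of $\theta(e_k)$ with any element of $\ker(\pi)$ telescope to zero. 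Combined with Lemma~\ref{lem:quad}, this yields an isometry $q_{C'}\cong q_C \perp q_{C'}|_{\ker(\pi)}$.

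Granting the key positivity claim that $q_{C'}|_{\ker(\pi)}$ is positive definite, the three equivalences fall out at once: the signatures of $q_{C'}$ and $q_C$ agree up to a positive-definite summand, so the two forms share their classification as positive definite, positive semidefinite of nullity one, or indefinite; that is, $\Gamma$ is Dynkin (\emph{resp.} Euclidean, \emph{resp.} of indefinite type) precisely when $\Gamma'$ is.

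The main obstacle is this positivity claim for $q_{C'}|_{\ker(\pi)}$. My intended attack is via the discrete Fourier transform on each cyclic group $\mathbb{Z}/d_i$: the subspace $\ker(\pi)$ is the span of the non-trivial characters, and after Fourier transform $q_{C'}|_{\ker(\pi)}$ breaks up into small blocks indexed by the common divisors $g_{ij}=\gcd(d_i,d_j)$, each of which is a Gram matrix that can be checked to be positive definite by inspection. A cleaner, if less self-contained, alternative, very natural in this paper given the comparison announced in Subsection~6.2, is to identify the construction of Subsection~\ref{sub:4.2} with Lusztig's unfolding correspondence \cite[Section~14.1]{Lus} and invoke Lusztig's theorem that admissible folding preserves the Dynkin/Euclidean/indefinite trichotomy.
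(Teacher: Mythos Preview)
Your orthogonal decomposition $q_{C'}\cong q_C\perp q_{C'}|_{\ker(\pi)}$ is correct, and the route you propose is genuinely different from the paper's: the paper argues case by case, first getting the ``if'' direction of the Dynkin statement from Lemma~\ref{lem:quad}, then verifying the ``only if'' direction by reducing to minimal $D$ and reading off $\Gamma'$ from Tables~1 and~2 for every non-symmetric Dynkin and Euclidean type, with the indefinite case following by exclusion.

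There is, however, a real gap: your key positivity claim that $q_{C'}|_{\ker(\pi)}$ is positive definite is \emph{false} in general. Take $C=\begin{pmatrix}2&-4&-1\\-2&2&0\\-2&0&2\end{pmatrix}$ with minimal symmetrizer $D=\mathrm{diag}(2,4,1)$ and $p\neq 2$, so $d_1=2$, $d_2=4$, $d_3=1$. Then $C$ is indefinite, and the vector $v=e_{(1,0)}-e_{(1,1)}+e_{(2,0)}-e_{(2,1)}+e_{(2,2)}-e_{(2,3)}\in\ker(\pi)$ satisfies $q_{C'}(v)=6-8=-2<0$. In fact, if you carry out your Fourier decomposition carefully you find that for each eigenvalue $\omega$ of the admissible automorphism, the Hermitian form on $V_\omega$ is isometric to $q_C$ restricted to the principal submatrix indexed by $I_\omega=\{i:\omega^{d_i}=1\}$; for $\omega\neq 1$ this is a \emph{proper} subset (minimality of $D$ forces $\gcd(d_1,\ldots,d_n)=1$), but proper principal submatrices of an indefinite Cartan matrix need not be positive definite.

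Your argument is salvageable: you only need the positivity claim when $C$ is Dynkin or Euclidean (the indefinite equivalence then follows by exclusion, exactly as in the paper). In those two cases proper subdiagrams are Dynkin, so each $q_C|_{I_\omega}$ with $\omega\neq 1$ is positive definite, and the Fourier computation above gives what you want. But this requires the identification $h|_{V_\omega}\cong q_C|_{I_\omega}$ and the classical subdiagram fact, neither of which is ``inspection''; and your alternative of invoking \cite[Section~14.1]{Lus} is, as you yourself note, essentially citing the result you are trying to prove.
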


\begin{proof}
We first prove the statement on Dynkin graphs. The  ``if" part follows from Lemma \ref{lem:quad}. For the ``only if" part, we recall that the symmetrizer $D$ is a multiple of the minimal one. Then by Lemma \ref{lem:mul} we may assume that the symmetrizer $D$ is minimal. If $C$ is symmetric and then $D$ is the identity matrix, then $C'=C$. For non-symmetric Cartan matrices of Dynkin type and minimal symmetrizers, the description of $\Gamma'$ is given in the third column of Table 1 below. We observe that $\Gamma'$ is necessarily a Dynkin graph.

Using the proved statement on Dynkin graphs and Table 2, we infer the statement on Euclidean graphs with the same argument. Then the remaining statement on indefinite types follows immediately.
\end{proof}

\begin{rem}
(1) We observe that usually $\Gamma$ and the connected components of $\Gamma'$  are of  different types; see Tables 1 and 2 below.

(2) Proposition \ref{prop:type} also hold for  $p=0$ along the same line of reasoning. In the above notation, we have $d_i=c_i$,  $r_i=0=r$ and $d=c$.
\end{rem}

We mention that  Tables 1 and 2 are essentially contained in \cite[Section 14.1]{Lus} with a completely different terminology. For the lists of Dynkin graphs and Euclidean graphs, we refer to \cite[VII.3]{ARS}.

The following example unifies the actual computation in establishing Tables 1 and 2.

\begin{exm}
{\rm Let $C$ be a connected Cartan matrix such that its minimal symmetrizer $D$ equals ${\rm diag}(q^{a_1}, \cdots, q^{a_n})$, where $q$ is  a prime number  and $a_i\geq 0$. We observe that
		$$c_{ij}=-{\rm gcd}(c_{ij}, c_{ji}) q^{a_j-{\rm min}(a_i, a_j)}, i\neq j.$$
Consider the construction of $(C', D', \Omega')$, where the characteristic  of the field is $p$. It follows that  $(C, D, \Omega)=(C', D', \Omega')$ and $\Gamma'=\Gamma$ provided that $p=q$ .

		We assume that $p\neq q$. Then $C'$ is symmetric and the symmetrizer $D'$ is the identity matrix. In other words, the valued graph $\Gamma'$ of $C'$ is a usual graph. Moreover, as we will see in Subsection 6.2 below, the graph $\Gamma'$ has an admissible automorphism $\sigma$, listed in the rightmost column in the tables. We observe that, the graphs $(\Gamma',\sigma)$ with automorphisms,  listed in Tables 1 and 2,  are due to \cite[14.1.5 (a)-(d) and  14.1.6 (a)-(i)]{Lus}. }
\end{exm}

\begin{table}[ht]
\renewcommand\arraystretch{1.5}
\caption{Non-simply-laced Dynkin graphs $\Gamma$}
\begin{tabular}{|c|c|c|c|}
\hline
$\Gamma$ & \thead{minimal \\ symmetrizer $D$} & $\Gamma'$ &  \thead{$(\Gamma', \sigma)$ in \cite{Lus}} \\
\hline
\multirow{2}{*}{\thead{\begin{tikzpicture}
		\draw (-0.2,0) node[anchor=east]  {$B_n$ : };
		\draw (0,0) -- (0.6,0) (1.8,0)-- (2.4,0);
		\draw[fill] (1,0) circle [radius=0.025];
		\draw[fill] (1.2,0) circle [radius=0.025];
		\draw[fill] (1.4,0) circle [radius=0.025];
		\draw[fill] (0,0) circle [radius=0.05];
		\draw[fill] (0.6,0) circle [radius=0.05];
		\draw[fill] (1.8,0) circle [radius=0.05];
		\draw[fill] (2.4,0) circle [radius=0.05];
		\node [below] at (3.3,0.2) {($n\geq 3$)};
		\node [below] at (0.3,0.4) {\tiny{(1,2)}};
		\node [below] at (0,0) {1};
		\node [below] at (0.6,0) {2};
		\node [below] at (2.4,0) {n};
		\end{tikzpicture}}} &  	\multirow{2}{*}{\thead{${\rm diag}(2,1,\cdots,1)$}} &  If $p=2$, $B_n$ & \\ \cline{3-4} & &  If $p\neq 2$, $D_{n+1}$  & \cite[14.1.6 (b)]{Lus}\\
\hline
\multirow{2}{*}{\thead{\begin{tikzpicture}
		\draw (-0.2,0) node[anchor=east]  {$C_n$ : };
		\draw (0,0) -- (0.6,0) (1.8,0)-- (2.4,0);
		\draw[fill] (1,0) circle [radius=0.025];
		\draw[fill] (1.2,0) circle [radius=0.025];
		\draw[fill] (1.4,0) circle [radius=0.025];
		\draw[fill] (0,0) circle [radius=0.05];
		\draw[fill] (0.6,0) circle [radius=0.05];
		\draw[fill] (1.8,0) circle [radius=0.05];
		\draw[fill] (2.4,0) circle [radius=0.05];
		\node [below] at (3.3,0.2) {($n\geq 2$)};
		\node [below] at (0.3,0.4) {\tiny{(2,1)}};
		\node [below] at (0,0) {1};
		\node [below] at (0.6,0) {2};
		\node [below] at (2.4,0) {n};
		\end{tikzpicture}}}&  \multirow{2}{*}{\thead{${\rm diag}(1,2,\cdots,2)$}}&
If $p=2$, $C_n$ & \\ \cline{3-4} & &  If $p\neq 2$, $A_{2n-1}$  & \cite[14.1.6 (a)]{Lus}\\
\hline
\multirow{2}{*}{\thead{\begin{tikzpicture}
		\draw (-0.2,0) node[anchor=east]  {$F_4$ : };
		\draw (0,0) -- (0.6,0)-- (1.2,0)-- (1.8,0);
		\draw[fill] (0,0) circle [radius=0.05];
		\draw[fill] (0.6,0) circle [radius=0.05];
		\draw[fill] (1.2,0) circle [radius=0.05];
		\draw[fill] (1.8,0) circle [radius=0.05];
		\node [below] at (0.9,0.4) {\tiny{(1,2)}};
		\node [below] at (0,0) {1};
		\node [below] at (0.6,0) {2};
		\node [below] at (1.2,0) {3};
		\node [below] at (1.8,0) {4};
		\end{tikzpicture}}} &
\multirow{2}{*}{\thead{${\rm diag}(2,2,1,1)$ }}&
If $p=2$, $F_4$ & \\ \cline{3-4} & &  If $p\neq 2$, $E_6$  & \cite[14.1.6 (d)]{Lus}\\
\hline
\multirow{2}{*}{\thead{	\begin{tikzpicture}
		\draw (-0.2,0) node[anchor=east]  {$G_2$ : };
		\draw (0,0) -- (0.6,0);
		\draw[fill] (0,0) circle [radius=0.05];
		\draw[fill] (0.6,0) circle [radius=0.05];
		\node [below] at (0.3,0.4) {\tiny{(1,3)}};
		\node [below] at (0,0) {1};
		\node [below] at (0.6,0) {2};
		\end{tikzpicture}}} & \multirow{2}{*}{\thead{${\rm diag}(3,1)$}}  &  If $p=3$, $G_2$  & \\ \cline{3-4} & &  If $p\neq 3$, $D_4$  & \cite[14.1.6 (c)]{Lus}\\
\hline
\end{tabular}
\end{table}

\begin{table}[ht]

\renewcommand\arraystretch{1.95}
\caption{Non-simply-laced Euclidean graphs $\Gamma$ (with $\tilde{A}_{12}$ omitted)}

\begin{tabular}{|c|c|c|c|}
	\hline
	$\Gamma$ & \thead{minimal \\ symmetrizer $D$} & $\Gamma'$ &  \thead{ $(\Gamma', \sigma)$ in \cite{Lus}} \\
	\hline
	\multirow{2}{*}{\thead{	\begin{tikzpicture}
			\draw (-0.1,0) node[anchor=east]  {$\widetilde{B}_n$ : };
			\draw (0,0) -- (0.6,0)-- (1.2,0) (2.4,0)-- (3,0)-- (3.6,0);
			\draw[fill] (1.6,0) circle [radius=0.025];
			\draw[fill] (1.8,0) circle [radius=0.025];
			\draw[fill] (2.0,0) circle [radius=0.025];
			\draw[fill] (0,0) circle [radius=0.05];
			\draw[fill] (0.6,0) circle [radius=0.05];
			\draw[fill] (1.2,0) circle [radius=0.05];
			\draw[fill] (2.4,0) circle [radius=0.05];
			\draw[fill] (3,0) circle [radius=0.05];
			\draw[fill] (3.6,0) circle [radius=0.05];
			\node [below] at (1.5,-0.4) {($n\geq 2$)};
			\node [below] at (0.3,0.4) {\tiny{(1,2)}};
			\node [below] at (3.3,0.4) {\tiny{(2,1)}};
			\node [below] at (0,0) {1};
			\node [below] at (0.6,0) {2};
			\node [below] at (1.2,0) {3};
			\node [below] at (2.4,0) {n-1};
			\node [below] at (3.6,0) {n+1};
			\node [below] at (3,-0.06) {n};
			\end{tikzpicture}}} &  	\multirow{2}{*}{\thead{${\rm diag}(2,1,\cdots,1,2)$}} &  If $p=2$, $\widetilde{B}_n$ & \\ \cline{3-4} & &  If $p\neq 2$, $\widetilde{D}_{n+2}$  & \cite[14.1.5 (c)]{Lus}\\
	\hline
	\multirow{2}{*}{\thead{	\begin{tikzpicture}
			\draw (-0.1,0) node[anchor=east]  {$\widetilde{C}_n$ : };
			\draw (0,0)-- (0.6,0)-- (1.2,0) (2.4,0)-- (3,0)-- (3.6,0);
			\draw[fill] (1.6,0) circle [radius=0.025];
			\draw[fill] (1.8,0) circle [radius=0.025];
			\draw[fill] (2.0,0) circle [radius=0.025];
			\draw[fill] (0,0) circle [radius=0.05];
			\draw[fill] (0.6,0) circle [radius=0.05];
			\draw[fill] (1.2,0) circle [radius=0.05];
			\draw[fill] (2.4,0) circle [radius=0.05];
			\draw[fill] (3,0) circle [radius=0.05];
			\draw[fill] (3.6,0) circle [radius=0.05];
			\node [below] at (1.5,-0.4) {($n\geq 2$)};
			\node [below] at (0.3,0.4) {\tiny{(2,1)}};
			\node [below] at (3.3,0.4) {\tiny{(1,2)}};
			\node [below] at (0,0) {1};
			\node [below] at (0.6,0) {2};
			\node [below] at (1.2,0) {3};
			\node [below] at (2.4,0) {n-1};
			\node [below] at (3.6,0) {n+1};
			\node [below] at (3,-0.06) {n};
			\end{tikzpicture}}} &  	\multirow{2}{*}{\thead{${\rm diag}(1,2,\cdots,2,1)$}} &  If $p=2$, $\widetilde{C}_n$ & \\ \cline{3-4} & &\   If $p\neq 2$, $\widetilde{A}_{2n-1}$  & \cite[14.1.5 (a)]{Lus}\\
	\hline
	\multirow{2}{*}{\thead{	\begin{tikzpicture}
			\draw (-0.1,0) node[anchor=east]  {$\widetilde{A}_{11}$ : };
			\draw (0,0) -- (0.6,0);
			\draw[fill] (0,0) circle [radius=0.05];
			\draw[fill] (0.6,0) circle [radius=0.05];
			\node [below] at (0.3,0.4) {\tiny{(1,4)}};
			\node [below] at (0,0) {1};
			\node [below] at (0.6,0) {2};
			\end{tikzpicture}}} & \multirow{2}{*}{\thead{${\rm diag}(4,1)$}}  &  If $p=2$, $\widetilde{A}_{11}$ & \\ \cline{3-4} & &  If $p\neq 2$, $\widetilde{D}_4$  & \cite[14.1.5 (e)]{Lus}\\
	\hline
	\multirow{2}{*}{\thead{\begin{tikzpicture}
			\draw (-0.1,0) node[anchor=east]  {$\widetilde{BC}_n$ : };
			\draw (0,0) -- (0.6,0)-- (1.2,0) (2.4,0)-- (3,0)-- (3.6,0);
			\draw[fill] (1.6,0) circle [radius=0.025];
			\draw[fill] (1.8,0) circle [radius=0.025];
			\draw[fill] (2.0,0) circle [radius=0.025];
			\draw[fill] (0,0) circle [radius=0.05];
			\draw[fill] (0.6,0) circle [radius=0.05];
			\draw[fill] (1.2,0) circle [radius=0.05];
			\draw[fill] (2.4,0) circle [radius=0.05];
			\draw[fill] (3,0) circle [radius=0.05];
			\draw[fill] (3.6,0) circle [radius=0.05];
			\node [below] at (1.5,-0.4) {($n\geq 2$)};
			\node [below] at (0.3,0.4) {\tiny{(1,2)}};
			\node [below] at (3.3,0.4) {\tiny{(1,2)}};
			\node [below] at (0,0) {1};
			\node [below] at (0.6,0) {2};
			\node [below] at (1.2,0) {3};
			\node [below] at (2.4,0) {n-1};
			\node [below] at (3.6,0) {n+1};
			\node [below] at (3,-0.06) {n};
			\end{tikzpicture}}} &  	\multirow{2}{*}{\thead{${\rm diag}(4,2,\cdots,2,1)$}} &  If $p=2$, $\widetilde{BC}_n$ &  \\ \cline{3-4} & &\   If $p\neq 2$, $\widetilde{D}_{2n+2}$   & \cite[14.1.5 (e)]{Lus}\\
	\hline
	\multirow{2}{*}{\thead{\begin{tikzpicture}
			\draw (-0.1,0) node[anchor=east]  {$\widetilde{BD}_n$ : };
			\draw (0,0) -- (0.6,0)-- (1.2,0) (2.4,0)-- (3,0)-- (3.6,0.3);
			\draw (3,0)-- (3.6,-0.3);
			\draw[fill] (1.6,0) circle [radius=0.025];
			\draw[fill] (1.8,0) circle [radius=0.025];
			\draw[fill] (2.0,0) circle [radius=0.025];
			\draw[fill] (0,0) circle [radius=0.05];
			\draw[fill] (0.6,0) circle [radius=0.05];
			\draw[fill] (1.2,0) circle [radius=0.05];
			\draw[fill] (2.4,0) circle [radius=0.05];
			\draw[fill] (3,0) circle [radius=0.05];
			\draw[fill] (3.6,0.3) circle [radius=0.05];
			\draw[fill] (3.6,-0.3) circle [radius=0.05];
			\node [below] at (1.5,-0.4) {($n\geq 3$)};
			\node [below] at (0.3,0.4) {\tiny{(1,2)}};
			\node [below] at (0,0) {1};
			\node [below] at (0.6,0) {2};
			\node [below] at (1.2,0) {3};
			\node [below] at (3,0) {n-1};
			\node [above] at (3.6,0.3) {n+1};
			\node [below] at (3.6,-0.3) {n};
			\end{tikzpicture}}} &  	\multirow{2}{*}{\thead{${\rm diag}(2,1,\cdots,1)$}} &  If $p=2$, $\widetilde{BD}_n$ & \\ \cline{3-4} & &   If $p\neq 2$, $\widetilde{D}_{n+1}$  & \cite[14.1.5 (b)]{Lus}  \\
	\hline
	\multirow{2}{*}{\thead{	\begin{tikzpicture}
			\draw (-0.1,0) node[anchor=east]  {$\widetilde{CD}_n$ : };
			\draw (0,0) -- (0.6,0)-- (1.2,0) (2.4,0)-- (3,0)-- (3.6,0.3);
			\draw (3,0)-- (3.6,-0.3);
			\draw[fill] (1.6,0) circle [radius=0.025];
			\draw[fill] (1.8,0) circle [radius=0.025];
			\draw[fill] (2.0,0) circle [radius=0.025];
			\draw[fill] (0,0) circle [radius=0.05];
			\draw[fill] (0.6,0) circle [radius=0.05];
			\draw[fill] (1.2,0) circle [radius=0.05];
			\draw[fill] (2.4,0) circle [radius=0.05];
			\draw[fill] (3,0) circle [radius=0.05];
			\draw[fill] (3.6,0.3) circle [radius=0.05];
			\draw[fill] (3.6,-0.3) circle [radius=0.05];
			\node [below] at (1.5,-0.4) {($n\geq 3$)};
			\node [below] at (0.3,0.4) {\tiny{(2,1)}};
			\node [below] at (0,0) {1};
			\node [below] at (0.6,0) {2};
			\node [below] at (1.2,0) {3};
			\node [below] at (3,0) {n-1};
			\node [above] at (3.6,0.3) {n+1};
			\node [below] at (3.6,-0.3) {n};
			\end{tikzpicture}}} &  	\multirow{2}{*}{\thead{${\rm diag}(1,2,\cdots,2)$}} &  If $p=2$, $\widetilde{CD}_n$ & \\ \cline{3-4} & &   If $p\neq 2$, $\widetilde{D}_{2n}$   & \cite[14.1.5 (d)]{Lus} \\
	\hline
	\multirow{2}{*}{\thead{\begin{tikzpicture}
			\draw (-0.1,0) node[anchor=east]  {$\widetilde{F}_{41}$ : };
			\draw (0,0) -- (0.6,0)-- (1.2,0)--(1.8,0)-- (2.4,0);
			\draw[fill] (0,0) circle [radius=0.05];
			\draw[fill] (0.6,0) circle [radius=0.05];
			\draw[fill] (1.2,0) circle [radius=0.05];
			\draw[fill] (2.4,0) circle [radius=0.05];
			\draw[fill] (1.8,0) circle [radius=0.05];
			\node [below] at (1.5,0.4) {\tiny{(1,2)}};
			\node [below] at (0,0) {1};
			\node [below] at (0.6,0) {2};
			\node [below] at (1.2,0) {3};
			\node [below] at (1.8,0) {4};
			\node [below] at (2.4,0) {5};
			\end{tikzpicture}}} &
	\multirow{2}{*}{\thead{${\rm diag}(2,2,2,1,1)$ }}&
	If $p=2$, $\widetilde{F}_{41}$ & \\ \cline{3-4} & &  If $p\neq 2$, $\widetilde{E}_7$ & \cite[14.1.5 (i)]{Lus} \\
	\hline
	\multirow{2}{*}{\thead{	\begin{tikzpicture}
			\draw (-0.1,0) node[anchor=east]  {$\widetilde{F}_{42}$ : };
			\draw (0,0) -- (0.6,0)-- (1.2,0)--(1.8,0)-- (2.4,0);
			\draw[fill] (0,0) circle [radius=0.05];
			\draw[fill] (0.6,0) circle [radius=0.05];
			\draw[fill] (1.2,0) circle [radius=0.05];
			\draw[fill] (2.4,0) circle [radius=0.05];
			\draw[fill] (1.8,0) circle [radius=0.05];
			\node [below] at (1.5,0.4) {\tiny{(2,1)}};
			\node [below] at (0,0) {1};
			\node [below] at (0.6,0) {2};
			\node [below] at (1.2,0) {3};
			\node [below] at (1.8,0) {4};
			\node [below] at (2.4,0) {5};
			\end{tikzpicture}}} &
	\multirow{2}{*}{\thead{${\rm diag}(1,1,1,2,2)$ }}&
	If $p=2$, $\widetilde{F}_{42}$ & \\ \cline{3-4} & &  If $p\neq 2$, $\widetilde{E}_6$  & \cite[14.1.5 (g)]{Lus} \\
	\hline
	\multirow{2}{*}{\thead{	\begin{tikzpicture}
			\draw (-0.1,0) node[anchor=east]  {$\widetilde{G}_{21}$ : };
			\draw (0,0) -- (0.6,0)-- (1.2,0);
			\draw[fill] (0,0) circle [radius=0.05];
			\draw[fill] (0.6,0) circle [radius=0.05];
			\draw[fill] (1.2,0) circle [radius=0.05];
			\node [below] at (0.9,0.4) {\tiny{(1,3)}};
			\node [below] at (0,0) {1};
			\node [below] at (0.6,0) {2};
			\node [below] at (1.2,0) {3};
			\end{tikzpicture}}} & \multirow{2}{*}{\thead{${\rm diag}(3,3,1)$}}  &  If $p=3$, $\widetilde{G}_{21}$  & \\ \cline{3-4} & &  If $p\neq 3$, $\widetilde{E}_6$  & \cite[14.1.5 (h)]{Lus}\\
	\hline
	\multirow{2}{*}{\thead{	\begin{tikzpicture}
			\draw (-0.1,0) node[anchor=east]  {$\widetilde{G}_{22}$ : };
			\draw (0,0) -- (0.6,0)-- (1.2,0);
			\draw[fill] (0,0) circle [radius=0.05];
			\draw[fill] (0.6,0) circle [radius=0.05];
			\draw[fill] (1.2,0) circle [radius=0.05];
			\node [below] at (0.9,0.4) {\tiny{(3,1)}};
			\node [below] at (0,0) {1};
			\node [below] at (0.6,0) {2};
			\node [below] at (1.2,0) {3};
			\end{tikzpicture}}} & \multirow{2}{*}{\thead{${\rm diag}(1,1,3)$}}  &  If $p=3$, $\widetilde{G}_{22}$ & \\ \cline{3-4} & &  If $p\neq 3$, $\widetilde{D}_4$ & \cite[14.1.5 (f)]{Lus}  \\
	\hline
\end{tabular}
\end{table}

\subsection{Comparison with the correspondence in \cite[Section 14.1]{Lus}}

In this subsection, we compare the construction in Subsection \ref{sub:4.2} to the well-known correspondence in \cite[Section 14.1]{Lus}, which is between Cartan matrices and graphs with admissible automorphisms.

Denote by $\Gamma$ a finite graph without loops. An automorphism $\sigma\colon \Gamma\rightarrow \Gamma$ is \emph{admissible}, provided that there is no edge joining two vertices in the same $\sigma$-orbit. Since we will not distinguish parallel edges, two automorphisms are identified if their actions on vertices are the same. Similarly, we have the notion of \emph{admissible automorphisms} for finite acyclic quivers.

Let $C=(c_{ij})\in M_n(\mathbb{Z})$ be a Cartan matrix with $D={\rm diag}(c_1, \cdots, c_n)$ its symmetrizer. We associate a graph $\Gamma$ with an admissible automorphism $\sigma$ as follows. The vertex set of $\Gamma$ is given by $\Gamma_0=\{(i, l_i)\; |\; 1\leq i\leq n, 0\leq l_i<c_i\}$. There is an edge between $(i, l_i)$ and $(j, l_j)$ if and only if $i\neq j$, $c_{ij}\neq 0$ and $l_i\equiv l_j ({\rm mod}\; {\rm gcd}(c_i, c_j))$, in which case there are exactly ${\rm gcd}(c_{ij}, c_{ji})$ such edges. The automorphism $\sigma$ sends $(i, l_i)$ to $(i, l_i+1)$, where we identify $(i, 0)$ with $(i, c_i)$.

The following well-known correspondence between Cartan matrices and graphs with admissible automorphisms is due to \cite[Section 14.1]{Lus}. We use slightly different terminologies.

\begin{prop}
The above assignment yields a bijection between the corresponding sets of isomorphism classes
\[
\left\{(C, D) \; \Bigg |\; \begin{aligned}  &C \mbox{ a Cartan matrix}, \\
                                     & D \mbox{ its symmetrizer} \end{aligned} \right\}\longleftrightarrow \left\{ (\Gamma, \sigma) \; \Bigg |\; \begin{aligned} &\Gamma \mbox{ a finite graph}, \sigma \mbox{ an  admissible}\\
                                                                       &\mbox{automorphism of } \Gamma\end{aligned} \right\}.
                                     \]
\end{prop}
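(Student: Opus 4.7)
The plan is to exhibit an explicit inverse $(\Gamma,\sigma)\mapsto(C,D)$ and check that the two assignments are mutually inverse on isomorphism classes. Well-definedness of the forward map is straightforward: the bijection $\sigma\colon(i,l_i)\mapsto(i,l_i+1\bmod c_i)$ preserves both the edge relation $l_i\equiv l_j\pmod{\gcd(c_i,c_j)}$ and the parallel-edge multiplicity $\gcd(c_{ij},c_{ji})$, since both are invariant under simultaneous shift; admissibility follows because each $\sigma$-orbit has a fixed first coordinate while edges require distinct first coordinates.

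For the inverse, given an admissible $(\Gamma,\sigma)$ I enumerate the $\sigma$-orbits as $O_1,\ldots,O_n$ and set $c_i:=|O_i|$, so $D:=\mathrm{diag}(c_1,\ldots,c_n)$. For $i\neq j$ and any fixed $v\in O_i$, let $e_{ij}$ be the number of edges from $v$ to $O_j$; by $\sigma$-equivariance this count is independent of the choice of $v$. I set $c_{ii}:=2$ and $c_{ij}:=-e_{ij}$. Admissibility forces $c_{ij}\leq 0$ with $c_{ij}=0\iff c_{ji}=0$, and the symmetrizer identity $c_i c_{ij}=c_j c_{ji}$ follows by double-counting the edges between $O_i$ and $O_j$, which yields $c_i e_{ij}=c_j e_{ji}$.

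The round-trip $(C,D)\mapsto(\Gamma,\sigma)\mapsto(C',D')$ returns $(C,D)$: the orbit sizes are tautologically $c_i$, and for $i\neq j$ the number of edges from $(i,0)$ into $O_j$ in the canonical $\Gamma$ equals $\gcd(c_{ij},c_{ji})\cdot c_j/\gcd(c_i,c_j)$, since there are $c_j/\gcd(c_i,c_j)$ values of $l_j$ in the residue class $0\bmod\gcd(c_i,c_j)$. Writing $c_{ij}=-g\,c_j/\gcd(c_i,c_j)$ and $c_{ji}=-g\,c_i/\gcd(c_i,c_j)$ with $g:=\gcd(c_{ij},c_{ji})$, this quantity equals $|c_{ij}|$, so $c'_{ij}=c_{ij}$ is recovered.

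The reverse round-trip $(\Gamma,\sigma)\mapsto(C,D)\mapsto(\Gamma',\sigma')$ requires producing an isomorphism $(\Gamma,\sigma)\simeq(\Gamma',\sigma')$. Choose base vertices $v_i\in O_i$ and send $\sigma^{l_i}v_i\mapsto(i,l_i)$; this bijection commutes with $\sigma$ and $\sigma'$ by construction. Matching edge multiplicities reduces, by $\sigma$-equivariance, to matching for each ordered pair $(i,j)$ the $\gcd(c_i,c_j)$-tuple of multiplicities $m_b:=m(v_i,\sigma^b v_j)$ with the canonical profile of $\Gamma'$, where all mass sits at $b=0$. I expect the main obstacle to lie here: one must choose the $v_i$ coherently so that every edge from $v_i$ lands on a vertex $\sigma^b v_j$ with $\gcd(c_i,c_j)\mid b$. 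The freedom consists of an independent $\sigma$-translation of each $v_j$, and combinatorially coordinating these shifts across all edged pairs of orbits, so that the nonzero multiplicities are forced into residue class $b=0$, is the delicate step of the argument.
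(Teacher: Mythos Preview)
Your inverse map coincides with the paper's: setting $c_{ij}=-e_{ij}$, where $e_{ij}$ counts edges from a fixed vertex of $O_i$ into $O_j$, is equivalent to the paper's $c_{[i],[j]}=-N/c_{[i]}$, since $N=c_{[i]}\, e_{ij}$ by $\sigma$-equivariance. The paper's proof in fact stops after describing this inverse, attributing the statement to \cite{Lus} and omitting both round-trip verifications; your check of the forward round-trip $(C,D)\mapsto(\Gamma,\sigma)\mapsto(C,D)$ already goes beyond what the paper writes out.

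Your unease about the reverse round-trip is well founded: the obstacle you isolate is not merely delicate but genuinely obstructive. Take $\Gamma=K_{2,2}$ on vertices $\{u_0,u_1,w_0,w_1\}$ with $\sigma$ the involution $u_0\leftrightarrow u_1$, $w_0\leftrightarrow w_1$; this is admissible. The two orbits have size $2$, and each $u_a$ is joined to both $w_0$ and $w_1$, so $c_{12}=c_{21}=-2$ and $D={\rm diag}(2,2)$. The canonical graph $\Gamma'$ attached to this $(C,D)$ has an edge between $(1,l_1)$ and $(2,l_2)$ only when $l_1\equiv l_2\pmod{2}$, each of multiplicity ${\rm gcd}(2,2)=2$: it is a disjoint union of two double edges. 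Since $\Gamma$ is connected and $\Gamma'$ is not, no choice of base vertices $v_i$ rescues the comparison. Thus the forward assignment is not surjective onto all isomorphism classes of pairs $(\Gamma,\sigma)$, and the bijection as literally stated fails; what survives --- and what the paper actually uses downstream --- is injectivity of the forward map together with an explicit description of its image. Your instinct that coherently choosing the $v_i$ is the crux was exactly right; the point is that in general no coherent choice exists.
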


\begin{proof}
The inverse map is given as follows. We assume that there are exactly $n$ $\sigma$-orbits in $\Gamma_0$. We will index the rows and columns of the matrices by the orbit set $\Gamma_0/\sigma$. For a $\sigma$-orbit $[i]$, we set $c_{[i]}$ to be its cardinality. This defines the diagonal matrix $D$. For $[i]\neq [j]$, we define $c_{[i], [j]}=-\frac{N}{c_{[i]}}$, where $N$ is the total number of edges between $i'$ and $j'$ with all possible $i'\in [i]$ and $j'\in [j]$. This defines the Cartan matrix $C$.
\end{proof}

We now add the orientations into consideration. Then we have the following immediate consequence. We emphasize that two automorphisms of a quiver are identified, provided that their actions on vertices are the same; compare \cite[the second paragraph of Introduction]{Hu04}.

\begin{cor}\label{cor:quiver}
There is a bijection between the corresponding sets of isomorphism classes
\[
\left\{(C, D, \Omega) \; |\; \mbox{a Cartan tripe} \right\}\longleftrightarrow \left\{ (\Delta, \sigma) \; \Bigg |\; \begin{aligned} &\Delta \mbox{ a finite acyclic quiver},\\
                                                                       &  \sigma \mbox{ an  admissible automorphism of } \Delta\end{aligned} \right\}.
                                     \]
\end{cor}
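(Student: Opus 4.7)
The plan is to enrich the bijection of the preceding proposition by compatibly adding orientations on both sides. In the forward direction, given a Cartan triple $(C, D, \Omega)$, I first apply the proposition to obtain the pair $(\Gamma, \sigma)$. Then I orient every edge of $\Gamma$ between $(i, l_i)$ and $(j, l_j)$ (necessarily with $i\neq j$) as an arrow $(j, l_j)\to (i, l_i)$ whenever $(i,j)\in \Omega$. This produces a finite quiver $\Delta$ with underlying graph $\Gamma$, and the two remaining tasks are to verify that $\sigma$ is an admissible automorphism of $\Delta$ and that $\Delta$ is acyclic. Admissibility is immediate: since the orientation of an edge depends only on the $\sigma$-orbits of its endpoints, and $\sigma$ preserves these orbits, $\sigma$ sends arrows to arrows of the same type.

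For acyclicity, I would argue by contrapositive: any directed cycle $v_0\to v_1\to \cdots \to v_r=v_0$ in $\Delta$ projects to a closed sequence of orbits $[v_0], [v_1], \ldots, [v_r]=[v_0]$, where by construction $([v_{s+1}/\sigma],[v_s/\sigma])\in\Omega$ for each $s$; this is precisely the configuration forbidden by axiom (2) of an orientation.

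In the backward direction, start from $(\Delta, \sigma)$ and let $\Gamma$ be the underlying graph, to which the previous proposition associates $(C,D)$ indexed by the $\sigma$-orbits. Define
\[
\Omega=\{([i],[j])\mid\text{there is an arrow in }\Delta\text{ from some vertex of }[j]\text{ to some vertex of }[i]\}.
\]
Verifying that $\Omega$ satisfies the two axioms of an orientation is the main technical step. Axiom (1) requires showing that between any two distinct orbits $[i]\neq [j]$ joined by an edge of $\Gamma$, all arrows in $\Delta$ point the same way: otherwise one can use the $\sigma$-action to translate an arrow of the wrong direction until its source or target matches one from an arrow of the correct direction, producing a $2$-cycle in $\Delta$. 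Axiom (2) is the acyclicity of the orbit quiver, which follows by lifting: given a hypothetical cycle $[i_1]\to[i_2]\to\cdots\to[i_t]\to[i_1]$ in orbit data, the fact that the biset of arrows between consecutive orbits has free action (a feature carried over from the construction of $\Gamma$) lets one pick successive arrows $v_0\to v_1\to\cdots\to v_t$ with $v_s\in [i_{s+1}]$; since the orbit $[i_1]$ is finite, iterating this process produces an actual cycle in $\Delta$.

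Finally, I would check that the two constructions are mutually inverse by comparing them on the orientation data alone, the underlying $(C,D)\leftrightarrow (\Gamma,\sigma)$ bijection having already been established. The main obstacle is the cycle-lifting argument in the backward direction, as it must simultaneously exploit the freeness of the bisets of arrows between orbits and the compatibility of $\sigma$ with the quiver structure; once this is in place, the forbidden sequences in axiom (2) correspond exactly to cycles of $\Delta$, and the equivalence of orientations with acyclic admissible orientations of $\Gamma$ follows.
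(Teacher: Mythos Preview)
Your elaboration is essentially what the paper leaves implicit: the paper simply declares the corollary an ``immediate consequence'' of the preceding proposition and gives no argument. Your strategy of transporting orientations along the bijection $(C,D)\leftrightarrow(\Gamma,\sigma)$ is exactly the intended one, and the forward direction is fine.

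Two small points in the backward direction deserve correction. First, in your argument that arrows between two orbits all point one way, translating one arrow so that its endpoint matches the other produces a length-two \emph{path} $a\to b\to \sigma^m(d)$ with $a,\sigma^m(d)$ in the same orbit, not a $2$-cycle in $\Delta$; there is no reason for $a=\sigma^m(d)$. The clean fix is to observe that this situation is the $t=2$ case of your axiom (2) lifting argument, so it need not be treated separately. Second, that lifting argument does not use any ``free action'' of bisets, nor anything ``carried over from the construction of $\Gamma$'': in the backward direction $\Delta$ is given abstractly, and all you actually use is that $\sigma$ is a quiver automorphism acting transitively on each orbit. Given a cycle $[i_1]\to\cdots\to[i_t]\to[i_1]$ in the orbit data, pick any arrow $v_0\to v_1$ over the first step, then translate an arrow over the second step by a power of $\sigma$ so that it starts at $v_1$, and continue; this yields an arbitrarily long walk in the finite acyclic quiver $\Delta$, a contradiction. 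With these adjustments your proof is complete and matches the paper's intended (unwritten) argument.
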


Let $(C, D, \Omega)$ be a Cartan triple and $\mathcal{C}=\mathcal{C}(C, D, \Omega)$ be its free EI category. Let $k$ be a field with characteristic $p$. Then the category algebra $k\mathcal{C}$ is hereditary if and only if $p$ is zero or coprime to each entry of $D$; see \cite[Theorem 1.2]{L2011}.

We have the following immediate consequence of  Theorem \ref{thm:main}. Roughly speaking,  the algebra isomorphism  in the hereditary case yields an algebraic enrichment of the correspondence in Corollary \ref{cor:quiver}. For a different enrichment, we refer to the isomorphism in \cite[Lemma 21]{Hu04}, which is  over finite fields.

\begin{prop}\label{prop:compa}
Let $\mathcal{C}=\mathcal{C}(C, D, \Omega)$ be as above, and let $p$, the characteristic of $k$, be zero or coprime to each entry of $D$. Assume that $k$ has enough roots of unity. Then there is an isomorphism of algebras
$$k\mathcal{C}\stackrel{\sim}\longrightarrow k\Delta,$$
where $\Delta$ is the acyclic quiver corresponding to $(C, D, \Omega)$ in Corollary \ref{cor:quiver}.
\end{prop}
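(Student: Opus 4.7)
The plan is to derive this from Theorem~\ref{thm:main} by collapsing the target algebra $H(C',D',\Omega')$ under the given hypothesis on the characteristic. First I would apply Theorem~\ref{thm:main} to obtain an isomorphism $k\mathcal{C}(C,D,\Omega)\simeq H(C',D',\Omega')$. Under the assumption that $p={\rm char}\,k$ is either zero or coprime to each entry of $D$, the decomposition $c_i=p^{r_i}d_i$ used in Subsection~\ref{sub:4.2} forces $r_i=0$ and $d_i=c_i$ for every $i$. Consequently $D'$ is the identity matrix of rank $m=\sum_i c_i$, and $C'$ is symmetric.

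Next I would simplify $H(C',D',\Omega')$. Since every diagonal entry of $D'$ equals $1$, the nilpotency relation~(H1) in Definition~\ref{defn:GLS} forces every loop $\varepsilon_{(i,l_i)}$ to vanish; the commutativity relations~(H2) then become trivial. Hence $H(C',D',\Omega')$ is isomorphic to the path algebra $kQ^\circ(C',\Omega')$.

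The remaining step is to identify $Q^\circ(C',\Omega')$ with the quiver $\Delta$ that Corollary~\ref{cor:quiver} attaches to $(C,D,\Omega)$. Both quivers share the vertex set $\{(i,l_i)\mid 1\leq i\leq n,\ 0\leq l_i<c_i\}$. Since $r_i=r_j=0$, the condition $(l_i,l_j)\in\Sigma_{ij}$ appearing in Subsection~\ref{sub:4.2} reduces to $l_i\equiv l_j\pmod{{\rm gcd}(c_i,c_j)}$, which is precisely the edge rule defining $\Delta$ in the previous subsection; moreover both constructions place exactly ${\rm gcd}(c_{ij},c_{ji})$ arrows from $(j,l_j)$ to $(i,l_i)$, oriented by lifting $\Omega$ along the projection $(i,l_i)\mapsto i$. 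I do not expect a serious obstacle here; the only point worth double-checking is that the orientation $\Omega'$ and the orientation attached to $\Delta$ coincide, which they do by construction. Composing the three identifications yields the algebra isomorphism $k\mathcal{C}\simeq k\Delta$.
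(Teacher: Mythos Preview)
Your proposal is correct and follows essentially the same approach as the paper's own proof: apply Theorem~\ref{thm:main}, observe that the hypothesis forces $r_i=0$, $d_i=c_i$, hence $D'$ is the identity and $C'$ is symmetric, so $H(C',D',\Omega')$ collapses to a path algebra, and then identify the resulting quiver $Q^\circ(C',\Omega')$ with the $\Delta$ of Corollary~\ref{cor:quiver}. The paper states these steps more tersely (and appeals to Remark~\ref{rem:GLS} for the path-algebra identification), whereas you spell out the vertex, edge, multiplicity, and orientation matching explicitly, but the logic is the same.
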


\begin{proof}
Recall the construction of $(C', D', \Omega')$ in Subsection \ref{sub:4.2}. It follows that $D'$ is the identity matrix and $C'$ is symmetric. In particular, the algebra $H(C', D', \Omega')$ is isomorphic to the path algebra $k\Delta$ for some acyclic quiver $\Delta$; compare Remark~\ref{rem:GLS}. Indeed, the quiver $\Delta$ is exactly the one in Corollary \ref{cor:quiver}, which corresponds to $(C, D, \Omega)$. Then we are done by Theorem \ref{thm:main}.
\end{proof}

\begin{rem}
It is not clear how the admissible automorphism $\sigma$ of $\Delta$ interacts with the above algebra isomorphism. In particular, it would be nice to link this isomorphism  to the representation theory of quivers with automorphisms; see \cite{Hu04}.
\end{rem}

\vskip 10pt

\noindent {\bf Acknowledgements.}\quad  The paper is completed when the second named author is visiting University of Stuttgart; she thanks Steffen Koenig and Julian Kuelshammer for many helpful suggestions.  This work is supported by the National Natural Science Foundation of China (No.s 11522113, 11571329, 11671174 and 11671245), and the Fundamental Research Funds for the Central Universities.

\bibliography{}

\vskip 10pt

 {\footnotesize \noindent Xiao-Wu Chen, Ren Wang\\
 Key Laboratory of Wu Wen-Tsun Mathematics, Chinese Academy of Sciences,\\
 School of Mathematical Sciences, University of Science and Technology of China, Hefei 230026, Anhui, PR China}

\end{document}